\tikzstyle cross=[preaction={draw=white, -, line width=6pt}, thick]
\tikzstyle normal=[thick]
\tikzstyle zero=[very thick, gray]
\tikzstyle zerocross=[preaction={draw=white, -, line width=6pt}, very thick, gray]
\tikzstyle point=[draw,circle,inner sep=1,fill=black]
\tikzstyle petitpoint=[draw,circle,inner sep=0.3,fill=black]
\numberwithin{equation}{section}
\newcommand{\C}{\mathbb{C}}
\newcommand{\Z}{\mathbb{Z}}
\newcommand{\kk}{\boldsymbol{ \operatorname{k}}}
\newcommand{\mf}[1]{\mathfrak{#1}}
\newcommand{\id}{\operatorname{id}}
\newcommand{\R}{\mathcal{R}}
\newcommand{\lef}{\vartriangleright}
\newcommand{\h}{\hslash}
\newcommand{\ot}{\otimes}
\newcommand{\modu}{\operatorname{-mod}}
\newcommand{\slz}{\widetilde{SL_2(\Z)}}
\newtheorem{thm}{Theorem}[section]
\newtheorem{cor}[thm]{Corollary}
\newtheorem{prop}[thm]{Proposition}
\newtheorem{defi}[thm]{Definition}
\theoremstyle{remark} 
\newtheorem{rmk}[thm]{Remark}
\address[a.brochier@ed.ac.uk]{Adrien Brochier, School of Mathematics and Maxwell Institute for Mathematical Sciences, University of Edinburgh, James Clerk Maxwell Building, Kings Buildings, Mayfield Road, Edinburgh EH9 3JZ, UK}
\address[d.jordan@ed.ac.uk]{David Jordan, School of Mathematics and Maxwell Institute for Mathematical Sciences, University of Edinburgh, James Clerk Maxwell Building, Kings Buildings, Mayfield Road, Edinburgh EH9 3JZ, UK}
\title[Fourier transform for quantum $D$-modules]{Fourier transform for quantum $D$-modules via the punctured torus mapping class group}
\author[A. Brochier, D. Jordan]{Adrien Brochier, David Jordan}
\begin{document}
\maketitle
\begin{abstract}
We construct a certain cross product of two copies of the braided dual $\tilde H$ of a quasitriangular Hopf algebra $H$, which we call the elliptic double $E_H$, and which we use to construct representations of the punctured elliptic braid group extending the well-known representations of the planar braid group attached to $H$.  We show that the elliptic double is the universal source of such representations. We recover the representations of the punctured torus braid group obtained in \cite{Jordan2009}, and hence construct a homomorphism from $E_H$ to the Heisenberg double $D_H$, which is an isomorphism if $H$ is factorizable.

The universal property of $E_H$ endows it with an action by algebra automorphisms of the mapping class group $\slz$ of the punctured torus.  One such automorphism we call the quantum Fourier transform; we show that when $H=U_q(\mathfrak{g})$, the quantum Fourier transform degenerates to the classical Fourier transform on $D(\mathfrak{g})$ as $q\to 1$.
\end{abstract}
\section{Introduction}%{{
Let $(H,\R)$ be a quasi-triangular Hopf algebra, and let $\tilde H$ denote the braided dual -- also known as the reflection equation algebra -- of $H$~\cite{Donin2003a,Donin2002,Donin2003,Majid1995}.    This is the restricted dual vector space $H^\circ$, but the multiplication is twisted from the standard one by the $R$-matrix (see Section~\ref{sec:dual} for details).

Let $\{e_i\}$ and $\{e^i\}$ denote dual bases of $H$ and $\tilde H$, respectively. Then the canonical element $X=\sum e^i \ot e_i\in \tilde{H}\ot H$ is known to satisfy the following relation in $\tilde H \ot H^{\ot 2}$:
\begin{equation}
	X^{0,12}:=(\id \ot \Delta)(X)=(\R^{1,2})^{-1}X^{0,2}\R^{1,2}X^{0,1}\label{REqn}
\end{equation}
Here, $\tilde H$ has index 0 in the tensor product, and $\Delta$ denotes the coproduct of $H$.

There is a canonical action of the planar braid group $B_n(\mathbb{R}^2)$ on the $n$th tensor $V^{\ot n}$ power of any $H$-module $V$.  Given modules $M$ for $\tilde H$ and $V$ for $H$, equation \eqref{REqn} allows one to define a similarly canonical action of the punctured planar braid group $B_n(\mathbb{R}^2\backslash disc)$ on $M\ot V^{\ot n}$, and moreover to show that $\tilde H$ is universal for this action. We have:

\begin{thm}[\cite{Donin2003a}, Prop 10] Let $B$ be an algebra, and suppose that $X_B\in B\ot H$ satisfies relation \eqref{REqn}.  Then there is a unique homomorphism $\phi_B:\tilde H\rightarrow B$ such that $(\phi_B\ot\id)(X)=X_B$.\label{REuniv}
\end{thm}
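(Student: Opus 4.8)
The plan is to read the statement as a universal property of the canonical element $X=\sum e^i\ot e_i$, whose ``matrix entries'' should generate $\tilde H$ subject only to the relation \eqref{REqn}. First I would observe that the equation $(\phi_B\ot\id)(X)=X_B$ already determines $\phi_B$ as a linear map, leaving no freedom: pairing the $H$-slot against an arbitrary $\xi\in\tilde H=H^\circ$ gives $(\id_B\ot\xi)\bigl((\phi_B\ot\id)(X)\bigr)=\phi_B\bigl(\sum_i\langle\xi,e_i\rangle e^i\bigr)=\phi_B(\xi)$, so necessarily
\[
\phi_B(\xi)=(\id_B\ot\xi)(X_B),\qquad\text{equivalently }\ \phi_B(e^i)=b^i\ \text{ when }\ X_B=\sum_i b^i\ot e_i.
\]
This settles uniqueness and pins down the only candidate map; the whole content of the theorem is then that this linear $\phi_B$ is multiplicative.

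To prove multiplicativity I would use both instances of \eqref{REqn} at once. Expanding the right-hand side of \eqref{REqn} for $X$ in the dual basis, the $\tilde H$-component of $(\R^{1,2})^{-1}X^{0,2}\R^{1,2}X^{0,1}$ is precisely a product $e^j\tilde\cdot e^k$ taken in $\tilde H$, so \eqref{REqn} becomes
\[
\sum_i e^i\ot\Delta(e_i)=\sum_{j,k}(e^j\tilde\cdot e^k)\ot\omega_{jk},\qquad \omega_{jk}:=\Theta(e_k\ot e_j),
\]
where $\Theta(x\ot y)=\sum\bar\R'\R'x\ot\bar\R''y\R''$ is the $R$-matrix conjugation operator on $H\ot H$ (writing $\R=\sum\R'\ot\R''$ and $\R^{-1}=\sum\bar\R'\ot\bar\R''$). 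Applying $\phi_B\ot\id\ot\id$ and using $(\phi_B\ot\id\ot\id)(\id\ot\Delta)(X)=(\id\ot\Delta)(\phi_B\ot\id)(X)=(\id\ot\Delta)(X_B)$ turns this into $(\id\ot\Delta)(X_B)=\sum_{j,k}\phi_B(e^j\tilde\cdot e^k)\ot\omega_{jk}$. On the other hand \eqref{REqn} holds for $X_B$ by hypothesis, and the identical expansion gives $(\id\ot\Delta)(X_B)=\sum_{j,k}\phi_B(e^j)\phi_B(e^k)\ot\omega_{jk}$. Subtracting yields
\[
\sum_{j,k}\bigl(\phi_B(e^j\tilde\cdot e^k)-\phi_B(e^j)\phi_B(e^k)\bigr)\ot\omega_{jk}=0\quad\text{in }B\ot H\ot H.
\]

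The crux is then to separate these coefficients, that is, to show the $\omega_{jk}=\Theta(e_k\ot e_j)$ are linearly independent; this amounts to $\Theta$ being a linear automorphism of $H\ot H$, and I expect it to be the only real obstacle. I would establish it by factoring $\Theta=Q\circ P$, where $Q(x\ot y)=\sum\bar\R'x\ot\bar\R''y$ is left multiplication by the invertible element $\R^{-1}\in H\ot H$, and $P(x\ot y)=\sum\R'x\ot y\R''=(L\ot R)(\R)$ with $L$ and $R$ the left- and right-regular representations of $H$. Since $L$ is an algebra map and $R$ an antihomomorphism, $L\ot R\colon H\ot H^{\mathrm{op}}\to\operatorname{End}(H)\ot\operatorname{End}(H)$ is an algebra homomorphism, so $P$ is invertible provided $\R$ is invertible in $H\ot H^{\mathrm{op}}$. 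The latter holds because $\id\ot S\colon H\ot H\to H\ot H^{\mathrm{op}}$ is an algebra isomorphism (as $S$ is a bijective antihomomorphism) and the standard identity $(\id\ot S^{-1})(\R)=\R^{-1}$ rewrites as $\R=(\id\ot S)(\R^{-1})$, exhibiting $\R$ as the image of the invertible element $\R^{-1}$.

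With $\Theta$ invertible the $\omega_{jk}$ form a basis of $H\ot H$, the coefficients above must vanish, and $\phi_B(e^j\tilde\cdot e^k)=\phi_B(e^j)\phi_B(e^k)$; bilinearity of $\tilde\cdot$ together with linearity of $\phi_B$ then give $\phi_B(\xi\tilde\cdot\eta)=\phi_B(\xi)\phi_B(\eta)$ for all $\xi,\eta\in\tilde H$. The one point not forced by \eqref{REqn} alone is unitality: that relation only yields idempotency of $\phi_B(1_{\tilde H})=(\id_B\ot\eps)(X_B)$, so to conclude $\phi_B(1_{\tilde H})=1_B$ I would invoke the counit normalization $(\id\ot\eps)(X)=1_{\tilde H}$ of the canonical element and its companion requirement on $X_B$ — a routine check compared with the $R$-matrix invertibility above.
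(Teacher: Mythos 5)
Your proof is correct, but be aware that the paper does not actually prove this statement: it is imported from \cite{Donin2003a} (Prop.~10), and its generalization (Proposition~\ref{prop:DKM2}) is likewise deferred to that reference. The proof there is a direct verification that the forced linear map $\phi_B(f)=(\id_B\ot f)(X_B)$ intertwines the twisted product with the product of $B$, by unwinding the definition of $f\tilde\cdot g$ and cancelling $R$-matrices via the identities the paper records as \eqref{eq:R1} and \eqref{eq:R2}. Your route is genuinely different: you read \eqref{REqn} for the canonical element as saying that the structure constants of $\tilde H$ are the coefficients of $(\id\ot\Delta)(X)$ in the family $\omega_{jk}=\Theta(e_k\ot e_j)$, and reduce the whole theorem to the single claim that $\Theta(x\ot y)=\sum\bar\R'\R'x\ot\bar\R''y\R''$ is a linear automorphism of $H\ot H$. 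You identify this as the crux correctly, and your proof of it is sound: $\Theta=Q\circ P$ with $Q$ invertible and $P=(L\ot R)(\R)$ the image of $\R$ under the algebra map $H\ot H^{\mathrm{op}}\to\operatorname{End}(H\ot H)$, where $\R$ is invertible in $H\ot H^{\mathrm{op}}$ because it equals $(\id\ot S)(\R^{-1})$; explicitly $P^{-1}(x\ot y)=\sum\R'x\ot yS(\R'')$. What the direct verification buys is independence from the invertibility of $\Theta$ and from the prior fact that $X$ itself satisfies \eqref{REqn} in $\tilde H\ot H^{\ot 2}$, which your argument consumes as an input (legitimately, since the paper asserts it just before the theorem statement). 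Two minor caveats, neither fatal: for infinite-dimensional $H$ the double sums live in a completion, so ``linear independence of the $\omega_{jk}$'' is better implemented by pairing the identity against $(g\ot f)\circ\Theta^{-1}$ for $f,g\in H^{\circ}$ than by extracting coefficients (the same issue the paper itself waves away); and your observation that \eqref{REqn} alone only makes $(\id\ot\eps)(X_B)$ idempotent is accurate ($X_B=0$ is a solution), so unitality of $\phi_B$ genuinely requires the normalization $(\id\ot\eps)(X_B)=1$, a point the statement glosses over.
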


The main goal of this paper is to define elliptic analogs of the reflection equation algebra.  The punctured elliptic braid group $B_n(T^2\backslash disc)$ is the free product of two copies of $B_n(\mathbb{R}^2\backslash disc)$, modulo certain relations.  In Section~\ref{sec:ell}  we construct an algebra $E_H$ as a certain crossed product of two copies of $\tilde H$, mimicking the cross relations of $B_n(T^2\backslash disc)$. We define canonical elements $X,Y\in E_H\ot H$ by
\[
X=\sum (e^i \ot 1) \ot e_i,\qquad
Y=\sum (1\ot e^i) \ot e_i,
\]
and characterize the cross relations on $E_H$ as follows:

\begin{thm}
	The cross relations of $E_H$ are equivalent to the following commutation relation in $E_H\ot H^{\ot 2}$ for $X,Y,\R$:
	\begin{equation}\label{eq:ellCom}		X^{0,1}\R^{2,1}Y^{0,2}=\R^{2,1}Y^{0,2}\R^{1,2}X^{0,1}\R^{2,1}
	\end{equation}
\end{thm}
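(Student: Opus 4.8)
The plan is to exploit the fact that the canonical element $X=\sum (e^i\ot 1)\ot e_i$ is \emph{universal} in the same sense that underlies Theorem~\ref{REuniv}: pairing its $H$-leg against an arbitrary functional recovers the corresponding element of the first copy of $\tilde H$ inside $E_H$. Concretely, for $\phi\in H^\circ=\tilde H$ we have $(\id\ot\phi)(X)=\sum\phi(e_i)(e^i\ot 1)=(\phi\ot 1)$, since $\sum\phi(e_i)e^i$ is just the expansion of $\phi$ in the dual basis; likewise pairing the $H$-leg of $Y$ against $\psi$ yields $(1\ot\psi)$. The single equation \eqref{eq:ellCom}, living in $E_H\ot H^{\ot 2}$, can therefore be converted into an entire family of relations in $E_H$ by applying arbitrary functionals $\phi,\psi$ to legs $1$ and $2$. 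Because functionals separate points of $H$ and the $e_i$ span $H$, this conversion is a bijection between \eqref{eq:ellCom} and the family it produces, so the whole problem reduces to identifying that family with the set of cross relations defining $E_H$.

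First I would write out the defining cross relations of $E_H$ explicitly from the crossed-product construction of Section~\ref{sec:ell}, in the form expressing a product $(1\ot b)(a\ot 1)$ of an element of the second copy with one of the first copy as a braided combination of products $(a'\ot 1)(1\ot b')$. These are precisely the relations modeled on the commutation of the two handle-loops of $B_n(T^2\backslash disc)$, and so they carry factors of $\R$ recording the crossings.

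Next I would pair legs $1$ and $2$ of \eqref{eq:ellCom} against arbitrary $\phi,\psi\in\tilde H$ and compute both sides. On the left, $X^{0,1}\R^{2,1}Y^{0,2}$ evaluates to $(\phi\ot 1)(1\ot\psi)$ twisted by the contribution of $\R^{2,1}$, whose $H$-legs occupy positions that get paired with $\phi$ and $\psi$; on the right, $\R^{2,1}Y^{0,2}\R^{1,2}X^{0,1}\R^{2,1}$ evaluates to $(1\ot\psi')(\phi'\ot 1)$ with the reordered $\R$-factors supplying the braiding twist. Matching these two expressions term by term against the cross relations from the first step establishes the equivalence in both directions at once.

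The main obstacle will be the careful bookkeeping of the three placements $\R^{2,1},\R^{1,2},\R^{2,1}$ and their interaction with the canonical elements under pairing. Since $X$ and $Y$ each touch leg $0$ together with one of legs $1,2$, while all three $\R$-factors also act on legs $1$ and $2$, one must repeatedly invoke the defining relation \eqref{REqn} for the canonical element together with the quasi-triangularity (hexagon) axioms to bring the $\R$-legs into evaluable position without disturbing the $E_H$-factor. The delicate point is making the $\R^{2,1}$ versus $\R^{1,2}$ conventions line up with the geometric crossings of the punctured-torus braid generators; once the $\R$-factors are correctly normalized, the identification with the cross relations is forced, and the equivalence follows.
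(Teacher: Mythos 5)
Your plan is essentially the paper's proof read in the opposite direction: the paper packages the whole family of cross relations $(1\ot g)(f\ot 1)=T\lef(f\ot g)$ into the single matrix identity $Y^{0,2}X^{0,1}=T\lef_0 X^{0,1}Y^{0,2}$ (using exactly your observation that pairing the $H$-legs against functionals $f,g$ is a bijection onto the cross relations), and then transfers the $T$-action from the $\tilde H$-legs to right/left multiplication on the $H$-legs via the duality $(x\ot y)\lef f=f(S^{-1}(x)\,\cdot\,y)$ and the identities $\R^{-1}=(S\ot\id)(\R)=(\id\ot S^{-1})(\R)$. One small correction to your anticipated bookkeeping: neither \eqref{REqn} nor the hexagon axioms are needed here (the hexagons only guarantee that $E_H$ is associative); the normalization of the three $\R$-factors falls out of the antipode identities alone.
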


We prove the following elliptic analog of Theorem \ref{REuniv}:

\begin{thm}\label{univ-prop}
Let $B$ be an algebra, and $X_B,Y_B \in B \ot H$ satisfying \eqref{REqn} individually, and \eqref{eq:ellCom} together. Then there exists a unique algebra morphism
\[
	\phi_B:E_H\longrightarrow B
\]
such that $X_B=(\phi_B \ot \id)(X)$ and $Y_B=(\phi_B \ot \id)(Y)$. Explicitly, $\phi_B$ is given by
\begin{align*}
\phi_B(x\ot 1)&= (\id \ot x)(X_B)&\phi_B(1\ot x)&=(\id \ot x)(Y_B).
\end{align*}
\end{thm}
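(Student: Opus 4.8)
The plan is to leverage the universal property already established in Theorem~\ref{REuniv} for a single copy of $\tilde H$, and then verify that the two resulting maps assemble into an algebra morphism on the crossed product $E_H$. Concretely, since $X_B$ satisfies \eqref{REqn}, Theorem~\ref{REuniv} furnishes a unique algebra homomorphism $\phi_B^X:\tilde H\to B$ with $(\phi_B^X\ot\id)(X)=X_B$, and explicitly $\phi_B^X(x)=(\id\ot x)(X_B)$. Likewise, since $Y_B$ satisfies \eqref{REqn}, I obtain $\phi_B^Y:\tilde H\to B$ with $\phi_B^Y(x)=(\id\ot x)(Y_B)$. The candidate map is then forced to be
\[
\phi_B(x\ot 1)=\phi_B^X(x)=(\id\ot x)(X_B),\qquad
\phi_B(1\ot x)=\phi_B^Y(x)=(\id\ot x)(Y_B),
\]
which immediately gives the claimed explicit formula as well as uniqueness, since $E_H$ is generated by the two copies $\tilde H\ot 1$ and $1\ot\tilde H$ and any morphism must restrict to $\phi_B^X$ and $\phi_B^Y$ on them by the uniqueness clause of Theorem~\ref{REuniv}.

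The substantive content is to check that $\phi_B$ is well-defined as an algebra map on $E_H$, i.e.\ that it respects the defining relations of the crossed product. The relations within each copy of $\tilde H$ are automatic because $\phi_B^X$ and $\phi_B^Y$ are already algebra homomorphisms. What remains is the cross relations between the two copies. Here I would invoke the previous theorem identifying the cross relations of $E_H$ with the commutation relation \eqref{eq:ellCom} for $X,Y,\R$ in $E_H\ot H$. Thus it suffices to show that the images satisfy the corresponding relation in $B\ot H$, namely
\begin{equation*}
X_B^{0,1}\R^{2,1}Y_B^{0,2}=\R^{2,1}Y_B^{0,2}\R^{1,2}X_B^{0,1}\R^{2,1},
\end{equation*}
which is exactly hypothesis \eqref{eq:ellCom} on the pair $(X_B,Y_B)$. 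Applying $\phi_B\ot\id\ot\id$ to the defining cross relations of $E_H$ (written in $E_H\ot H^{\ot 2}$) and using $(\phi_B\ot\id)(X)=X_B$, $(\phi_B\ot\id)(Y)=Y_B$ should transport \eqref{eq:ellCom} in $E_H$ to the required identity in $B$, confirming that $\phi_B$ descends to the quotient defining $E_H$.

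The main obstacle I anticipate is not conceptual but bookkeeping: one must be careful that the element-wise definition $\phi_B(x\ot 1)=(\id\ot x)(X_B)$ is genuinely an algebra map for the \emph{twisted} (reflection-equation) multiplication on $\tilde H$, and that the tensor-leg indices in \eqref{eq:ellCom} are tracked consistently when passing between the universal element formulation in $E_H\ot H^{\ot 2}$ and the evaluated relation in $B$. In particular I would want to confirm that the cross relations of $E_H$ are not merely implied by \eqref{eq:ellCom} but equivalent to it, so that verifying \eqref{eq:ellCom} for $(X_B,Y_B)$ is both necessary and sufficient for $\phi_B$ to respect all the crossed-product relations; this equivalence is supplied by the earlier theorem. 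Once the evaluation-versus-universal-element dictionary is set up cleanly, the proof reduces to the two applications of Theorem~\ref{REuniv} together with a single relation-matching computation, and uniqueness follows formally from the generation of $E_H$ by its two $\tilde H$-subalgebras.
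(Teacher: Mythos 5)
Your proposal is correct and follows exactly the route the paper intends: the statement appears there as Corollary~\ref{cor:universal}, an immediate consequence of Proposition~\ref{prop:DKM2} (applied once to each copy of $\tilde H$) together with Theorem~\ref{EHrelns}, which identifies the cross relations of $E_H$ with equation~\eqref{eq:ellCom}. The paper leaves the assembly argument implicit, and your write-up supplies precisely the expected details, including the key point that the equivalence (not just one implication) in Theorem~\ref{EHrelns} is what makes the hypothesis on $(X_B,Y_B)$ sufficient.
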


Equation \eqref{eq:ellCom} can be used to define representations of $B_n(T^2\backslash disc)$ in the same way as \eqref{REqn} is used for $B_n(\mathbb{R}^2\backslash disc)$; see Theorem \ref{braid-actions}.  Recall that $B_n(T^2\backslash disc)$ carries a natural action of the punctured torus mapping class group, which is isomorphic to a certain central extension $\slz$ of $SL_2(\Z)$. In the case $H$ is a \emph{ribbon} Hopf algebra, we show that this extends to an action of $\slz$ on $E_H$.

When $H=U_q(\mathfrak{g})$, we produce degenerations of $E_H$ to the algebras of differential operators on $G$ and, upon further degeneration, on $\mathfrak{g}$.  Recall that the algebra of differential operators on an algebraic group $G$ can be constructed as a semi-direct product
\[
D(G)= U(\mf g)\ltimes O(G),
\]
where the action of $U(\mathfrak{g})$ on $O(G)$ is induced by that of $\mf g$ on $G$ by left invariant differential operators. This construction can be extended to any Hopf algebra and is known as the Heisenberg double~\cite{Semenov1994}. This is a semi-direct product $D_H=H\ltimes H^{\circ}$, where $H$ acts on its dual by the right coregular action.

In~\cite{Jordan2009}, canonical functors are constructed from the category of modules over the Heisenberg double of a quasi-triangular Hopf algebra to the category of modules over the (unpunctured) torus braid group. This relies upon an alternate construction -- due to Varagnolo-Vasserot \cite{Varagnolo2010} -- of the Heisenberg double of a quasi-triangular Hopf algebra, which uses the braided dual $\tilde H$ in place of $H^{\circ}$.  This presentation for the Heisenberg double also yields an isomorphism with the handle algebras introduced by Alekseev in~\cite{Alekseev1993} and studied further in~\cite{Alekseev1996,Alekseev1996a,Roche2002} (see Remark \ref{AGS-remark}).

Lifting the constructions of \cite{Jordan2009} to the unpunctured torus braid group, they can easily be re-interpreted as producing canonical elements $X$ and $Y$ in $D_H \ot H$, satisfying equations \eqref{REqn} and \eqref{eq:ellCom}.  Hence, Theorem \ref{univ-prop} yields a unique homomorphism $\Phi:E_H\to D_H$, compatible with the representations of the $B_n(T^2\backslash disc)$ on both sides.  The map $\Phi$ is an isomorphism if, and only if, $H$ is \emph{factorizable}. Since the quantum group $U_q(\mf g)$ is factorizable, we may identify the elliptic double $E_{U_q(\mathfrak{g})}$ with the algebra $D_q(G):=D_{U_q(\mathfrak{g})}$ of quantum differential operators on $G$.  

In particular we obtain an $\slz$ action on $D_q(G)$ by the above considerations.  One such automorphism of $D_q(G)$ we call the \emph{quantum Fourier transform}; its classical limit upon an appropriate degeneration is the classical Fourier transform on the Weyl algebra $D(\mf g)$.  We expect that our quantum Fourier transform for $D_q(G)$ will be compatible with that on the braided dual of $U_q(\mf g)$ defined in \cite{Lyubashenko1994}, realizing the braided dual as an $\slz$-equivariant $D_q(G)$-module. Studying this category of $\slz$-equivariant $D_q(G)$-modules more generally is an interesting direction of future research.

This paper is a companion to~\cite{Ben-Zvi2015}, in which we compute the value of a certain category valued 2-dimensional topological field theory attached to $H\modu$, and show that its value on a punctured torus is the category of $H$-equivariant $E_H$-modules.
\subsection*{Acknowledgments}
We are grateful to D. Ben-Zvi, and to all three authors of \cite{Calaque2009}, for their many helpful discussions and encouragement, and to P. Roche for bringing the article \cite{Alekseev1996} to our attention.

%}}}
\section{The braided dual and its relatives}\label{sec:dual}%{{{
Let $(H,\R)$ be a quasi-triangular Hopf algebra, and denote by:
\begin{itemize}
\item $H^e=H^{coop}\ot H$ where $H^{coop}$ is $H$ with opposite comultiplication
\item $H^{[2]}$ the Hopf algebra which is $H\ot H$ as an algebra, and with coproduct given by
\[
\tilde\Delta(x\ot y)=(\R^{2,3})^{-1} (\tau^{2,3} \circ \Delta(x\ot y) ) \R^{2,3}
\]
\end{itemize}
where $\tau(a\ot b)=b\ot a$. Recall that the twist $H^F$ of $H$ by an invertible element $F\in H\ot H$ is the Hopf algebra with the same multiplication, and with coproduct given by 
\[
	\Delta^F(x)=F^{-1}\Delta(x)F.
\]
In order for $H^F$ to be co-associative, $F$ must satisfy two conditions:
\[
F^{12,3}F^{1,2}=F^{1,23}F^{2,3},\qquad (\epsilon \ot \id)(F)=(\id \ot \epsilon)(F)=1.
\]
Two twists $F,F'$ are \emph{equivalent} if there exists an invertible element $x\in H$, such that $\epsilon(x)=1$ and
\[
F'=\Delta(x)F(x^{-1}\ot x^{-1}).
\]
The following is standard (see \cite{Drinfeld1990}):
\begin{prop}
A twist induces a tensor equivalence $H\modu \rightarrow H^{F}\modu$. Equivalent twists leads to isomorphic tensor functors.
\end{prop}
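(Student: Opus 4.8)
The plan is to build the equivalence by hand on underlying vector spaces, using that $H$ and $H^F$ coincide as algebras. Define $\mathcal{F}\colon H\modu\to H^F\modu$ to be the identity on underlying spaces and on morphisms, sending an $H$-module $M$ to the same space viewed, through the common multiplication, as an $H^F$-module. Since $H=H^F$ as algebras, $\mathcal{F}$ is already an isomorphism of the underlying linear categories, so the entire point is to equip it with a tensor structure and to check coherence.

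For $H$-modules $M,N$ the objects $\mathcal{F}(M)\ot\mathcal{F}(N)$ and $\mathcal{F}(M\ot N)$ share the underlying space $M\ot N$, but an element $x$ acts via $\Delta^F(x)=F^{-1}\Delta(x)F$ on the first and via $\Delta(x)$ on the second. I would therefore take the comparison map $J_{M,N}\colon \mathcal{F}(M)\ot\mathcal{F}(N)\to\mathcal{F}(M\ot N)$ to be the action of $F$ on $M\ot N$. The identity $F\cdot\Delta^F(x)=\Delta(x)\cdot F$ shows that $J_{M,N}$ is $H^F$-linear, invertibility of $F$ makes it an isomorphism, and naturality in $M$ and $N$ is automatic because tensor products of module morphisms commute with the action of the fixed element $F$.

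It remains to verify the monoidal coherence, which is the real content. Expanding the two composites on $M\ot N\ot P$, the associativity coherence square for $J$ reduces to $F^{12,3}F^{1,2}=F^{1,23}F^{2,3}$, which is exactly the cocycle axiom of a twist, while the unit constraints reduce to $(\epsilon\ot\id)(F)=(\id\ot\epsilon)(F)=1$; both module categories carry the trivial associativity and unit constraints of vector spaces, so no further data intervene. This shows that $(\mathcal{F},J)$ is a tensor equivalence.

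For the second assertion, write $F'=\Delta(x)F(x^{-1}\ot x^{-1})$ with $x$ invertible and $\epsilon(x)=1$. Conjugation by $x$ is an algebra automorphism of $H$, and a direct check shows it promotes to a Hopf algebra isomorphism $\psi\colon H^F\to H^{F'}$, so that restriction $\psi^{*}$ is a (strict) tensor functor $H^{F'}\modu\to H^F\modu$. I would then exhibit a monoidal natural isomorphism $\mathcal{F}^F\Rightarrow\psi^{*}\circ\mathcal{F}^{F'}$ whose component at $M$ is the action of $x$, regarded as a morphism into the pulled-back module. Naturality and invertibility are immediate, compatibility with units follows from $\epsilon(x)=1$, and the monoidal compatibility unwinds, via $\eta_{M\ot N}=\Delta(x)$, to exactly the relation $\Delta(x)F=F'(x\ot x)$, i.e.\ to the definition of equivalence of twists. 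The only genuine computations are these coherence checks, and each collapses immediately upon substituting the twist axioms; the main (minor) obstacle is merely bookkeeping the index conventions so that the cocycle and equivalence relations appear in precisely the stated form.
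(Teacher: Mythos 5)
Your proof is correct and is precisely the standard argument: the identity functor with tensor structure given by the action of $F$, coherence reducing to the cocycle and counit axioms, and for equivalent twists the monoidal natural isomorphism with components given by the action of $x$ intertwining through the Hopf isomorphism $\mathrm{Ad}(x)\colon H^F\to H^{F'}$. The paper gives no proof of its own, citing \cite{Drinfeld1990} as standard, and your argument is exactly the one found there, with all coherence checks carried out correctly.
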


It is easily checked that  $F=\R^{1,3}\R^{1,4} \in (H^e)^{\ot 2}$ is a twist, and that
\[
H^{[2],coop}=(H^e)^F.
\]
Let $D$ be the ``double braiding" $\R^{2,1}\R^{1,2}$.  Since $D\Delta(x)=\Delta(x)D$ for all $x$, we have: 
\[
H^{D}=H
\]
as Hopf algebras. Similarly, $H^{[2],coop}$ is in fact equal to $(H^e)^{F(D^{1,3})^k}$ for any $k\in \Z$, with $F$ as above.

Let $H^{\circ}$ be the restricted Hopf algebra dual of $H$. It has a natural $H$-bimodule structure, hence a $H^e$ left module structure given by:
\[
	(x \ot y)\lef f := f(S^{-1}(x)\,\cdot\, y)
\]
where $S$ is the antipode of $H$ and we use the fact that $S^{-1}$ is a Hopf algebra isomorphism $H^{coop}\rightarrow H_{op}$. It turns $H^{\circ}$ into an algebra in $H^e\modu$.
\begin{rmk}
Remember that the antipode of an Hopf algebra need not to be invertible in general, but this is implied by quasi-triangularity.
\end{rmk}
\begin{rmk}
We use the inverse of the antipode rather than the antipode itself because it is convenient to consider the canonical element as an invariant element of $H^\circ \ot H$, the image of $1\in\mathbb{C}$ under the evaluation map $\kk \rightarrow H^\circ \ot H$, which means that $H^\circ$ really denotes the \emph{left} dual of $H$ in the rigid monoidal category of $H$-modules.  This is slightly different from the convention used in~\cite{Donin2003a,Jordan2009} but it allows us to label tensor factors from left to right.
\end{rmk}
\begin{defi}
The $k$th twisted braided dual $\tilde H_k$ is the algebra image of $H^{\circ}$ via the tensor functor $H^e\modu\rightarrow H^{[2],coop}\modu$ given by the twist $F(D^{1,3})^k$. Explicitly, this is $H^{\circ}$ as a vector space, with multiplication given by
\[
x \cdot y = m ( \R^{1,3}\R^{1,4}(D^{1,3})^k\lef (x \ot y))
\]
where $m$ is the multiplication of $H^{\circ}$. This is an algebra in the category of $H^{[2],coop}$-module with the same action as above, namely
\[
	(x\ot y)\lef f=(u \mapsto f(S^{-1}(x)uy)).
\]
\end{defi}

\begin{rmk}
The algebra $\tilde H_0$ is usually called the reflection dual, the braided dual or the reflection equation algebra in the literature.
\end{rmk}
Let $X$ be the canonical element of $\tilde H_k\ot H$, that is the image of 1 under the coevaluation map $\kk\rightarrow \tilde H_k \ot H$. If $e_i$ is a basis of $H$ and $e^i$ the dual basis of $\tilde H_k \cong H^{\circ}$, then $X=\sum e^i \ot e_i$. If $H$ is infinite dimensional then $X$ lives in an appropriate completion of the tensor product.
\begin{prop}\label{prop:DKM1}
The element $X$ satisfies:
\begin{equation}\label{eq:reflec}
	X^{0,12}=D^k(\R^{1,2})^{-1}X^{0,2}\R^{1,2}X^{0,1}
\end{equation}
in $\tilde H_k \ot H^{\ot 2}$.
This implies that $X$ satisfies the reflection equation
\[
	\R^{2,1}X^{0,2}\R^{1,2}X^{0,1}=X^{0,1}\R^{2,1}X^{0,2}\R^{1,2}
\]
in $\tilde H_k \ot H^{\ot 2}$.
\end{prop}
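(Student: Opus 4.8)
The plan is to establish \eqref{eq:reflec} first and then deduce the reflection equation from it as a formal consequence. For \eqref{eq:reflec}, I would test both sides by pairing the $\tilde H = H^\circ$ factor (slot $0$) against an arbitrary $a \in H$; since the pairing between $H^\circ$ and $H$ is nondegenerate this detects equality of elements of $\tilde H \ot H \ot H$ (and it also sidesteps the completion issue when $H$ is infinite dimensional). Applying $\mathrm{ev}_a \ot \id \ot \id$ with $\mathrm{ev}_a(\phi) = \phi(a)$, the left-hand side collapses immediately by the reproducing property $\sum_i e^i(a) e_i = a$:
\[
(\mathrm{ev}_a \ot \id \ot \id)(X^{0,12}) = \sum_i e^i(a)\,\Delta(e_i) = \Delta(a).
\]
It therefore remains to show that the right-hand side also evaluates to $\Delta(a)$.

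Expanding the right-hand side requires unwinding the twisted multiplication of $\tilde H_k$. Writing the twist as $T = \R^{1,3}\R^{1,4}(D^{1,3})^k = \sum T_1 \ot T_2 \ot T_3 \ot T_4 \in (H^e)^{\ot 2}$ and using $((x \ot y) \lef f)(u) = f(S^{-1}(x)\,u\,y)$ together with the definition of the product, one finds for $f,g \in H^\circ$ that
\[
(f \cdot g)(a) = \sum f\bigl(S^{-1}(T_1)\,a_{(1)}\,T_2\bigr)\,g\bigl(S^{-1}(T_3)\,a_{(2)}\,T_4\bigr).
\]
Applying this to $X^{0,2}\cdot X^{0,1} = \sum_{i,j}(e^j \cdot e^i)\ot e_i \ot e_j$, evaluating slot $0$ at $a$, and collapsing the two dual-basis sums via $\sum_i e^i(u)e_i = u$, the evaluation of $X^{0,2}\cdot X^{0,1}$ becomes $\sum S^{-1}(T_3)\,a_{(2)}\,T_4 \ot S^{-1}(T_1)\,a_{(1)}\,T_2$. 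As the prefactor $D^k(\R^{1,2})^{-1}$ acts only in slots $1,2$, equation \eqref{eq:reflec} reduces to the purely Hopf-algebraic identity
\[
D^k(\R^{1,2})^{-1}\Bigl(\sum S^{-1}(T_3)\,a_{(2)}\,T_4 \ot S^{-1}(T_1)\,a_{(1)}\,T_2\Bigr) = \Delta(a).
\]
I expect verifying this identity to be the main obstacle. It is a direct, if lengthy, computation: substitute $T = \R^{1,3}\R^{1,4}(D^{1,3})^k$ and reduce the R-matrix legs using the quasi-triangularity axioms $(\Delta \ot \id)(\R) = \R^{1,3}\R^{2,3}$ and $(\id \ot \Delta)(\R) = \R^{1,3}\R^{1,2}$, the antipode identity $(S \ot \id)(\R) = \R^{-1}$, and quasi-cocommutativity $\R\,\Delta(a) = \Delta^{\mathrm{op}}(a)\,\R$; all the real work is in the bookkeeping of legs. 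It is prudent to check the untwisted case $k = 0$ first, where \eqref{eq:reflec} specializes to \eqref{REqn}.

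Finally, the reflection equation follows formally from \eqref{eq:reflec} by applying the flip $\tau$ of the two $H$-factors (slots $1,2$), which is an algebra automorphism of $\tilde H_k \ot H \ot H$ fixing slot $0$. On one hand $\tau(X^{0,12}) = (\id \ot \Delta^{\mathrm{op}})(X) = \R^{1,2}X^{0,12}(\R^{1,2})^{-1}$ by quasi-cocommutativity; on the other, applying $\tau$ directly to the right-hand side of \eqref{eq:reflec} gives $(D^{2,1})^k(\R^{2,1})^{-1}X^{0,1}\R^{2,1}X^{0,2}$ with $D^{2,1} = \R^{1,2}\R^{2,1}$. Equating these, substituting \eqref{eq:reflec} on the left and using $\R^{1,2}D^k(\R^{1,2})^{-1} = (\R^{1,2}\R^{2,1})^k = (D^{2,1})^k$, the factors $(D^{2,1})^k$ cancel and leave
\[
X^{0,2}\R^{1,2}X^{0,1}(\R^{1,2})^{-1} = (\R^{2,1})^{-1}X^{0,1}\R^{2,1}X^{0,2};
\]
multiplying on the left by $\R^{2,1}$ and on the right by $\R^{1,2}$ yields the reflection equation exactly. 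Note that the $k$-dependence cancels, as it must, since the reflection equation does not involve $k$.
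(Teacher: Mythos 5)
Your second half is fine: applying the flip of the two $H$-factors to \eqref{eq:reflec}, using $\tau(X^{0,12})=\R^{1,2}X^{0,12}(\R^{1,2})^{-1}$ and $\R^{1,2}D^k(\R^{1,2})^{-1}=(\R^{1,2}\R^{2,1})^k$, does cancel all $k$-dependence and yields the reflection equation; this is an explicit version of the paper's remark that the left-hand side of \eqref{eq:reflec} is conjugation-invariant under $D$, and it is a legitimate (indeed cleaner) route than the paper, which for the first assertion simply cites \cite{Donin2003a} for $k=0$ and declares the general case similar.

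The first half, however, contains a concrete error and then defers the remaining content. The right-hand side of \eqref{eq:reflec} is $D^k(\R^{1,2})^{-1}X^{0,2}\R^{1,2}X^{0,1}$: the copy of $\R^{1,2}$ sitting \emph{between} $X^{0,2}$ and $X^{0,1}$ is not part of the prefactor and cannot be dropped or moved to the front, since it does not commute with $X^{0,2}$ in slot $2$. You evaluate $X^{0,2}X^{0,1}$ rather than $X^{0,2}\R^{1,2}X^{0,1}$; writing $\R=\sum r_1\ot r_2$ for that middle copy, the correct target identity is
\[
D^k(\R^{1,2})^{-1}\Bigl(\sum r_1\,S^{-1}(T_3)\,a_{(2)}\,T_4\ot S^{-1}(T_1)\,a_{(1)}\,T_2\,r_2\Bigr)=\Delta(a),
\]
not the one you state. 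Yours is generically false: if it held together with \eqref{eq:reflec} it would force $X^{0,2}\R^{1,2}X^{0,1}=X^{0,2}X^{0,1}$, which fails as soon as $\R\neq 1\ot 1$ and $X$ is invertible. (Your proposed sanity checks would not catch this in the cocommutative case, where $\R=1\ot1$ makes the two versions coincide, but comparing with \eqref{REqn} at $k=0$ would.) Beyond this, you explicitly leave the verification of the (corrected) identity as ``the main obstacle'' to be done later; that verification \emph{is} the first assertion of the proposition, so as written the proposal does not establish \eqref{eq:reflec}.
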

The braided dual is in fact universal for this property in the following sense:
\begin{prop}\label{prop:DKM2}
	Let $B$ be an algebra and $X_B \in B\ot H$ satisfying equation~\eqref{eq:reflec} in $B\ot H^{\ot 2}$ for some $k\in \Z$. Then there exists a unique algebra morphism
	\[
\phi_B:\tilde H_k \longrightarrow B
	\]
	such that $(\phi_B \ot \id)(X)=X_B$. Explicitly, $\phi_B$ is given by
	\[
		H^{\circ}\cong \tilde H \ni f \longmapsto (f \ot id)(X).
	\]
\end{prop}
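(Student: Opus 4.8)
The plan is to take for $\phi_B$ the formula in the statement, verify in order that it intertwines the canonical elements, that it is the unique such map, and finally that it is an algebra morphism, this last point being the only substantial one. Following the statement, I set $\phi_B(f)=(\id\ot f)(X_B)$, that is, I pair the functional $f\in\tilde H\cong H^\circ$ against the $H$-tensorand of $X_B$; writing $X_B=\sum_a b_a\ot h_a$ this reads $\phi_B(f)=\sum_a f(h_a)\,b_a$ and is manifestly linear. Using the completeness relation $\sum_i e^i(h)\,e_i=h$ one then computes
$(\phi_B\ot\id)(X)=\sum_i\phi_B(e^i)\ot e_i=\sum_a b_a\ot\big(\sum_i e^i(h_a)e_i\big)=\sum_a b_a\ot h_a=X_B$,
which is the required intertwining property.

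Uniqueness, and the fact that the formula is forced, come for free: if $\psi\colon\tilde H_k\to B$ is any linear map with $(\psi\ot\id)(X)=X_B$, then applying $\id\ot f$ and using $f=\sum_i f(e_i)e^i$ gives $\psi(f)=(\psi\ot f)(X)=\sum_i\psi(e^i)f(e_i)=(\id\ot f)(X_B)=\phi_B(f)$. Note that this step uses only linearity and the intertwining relation, not multiplicativity.

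The heart of the argument is multiplicativity. On one side the same dual-basis bookkeeping yields $\phi_B(f)\phi_B(g)=(\id\ot f\ot g)(X_B^{0,1}X_B^{0,2})$. On the other I must compute $\phi_B(f\cdot g)=(\id\ot(f\cdot g))(X_B)$ for the twisted product $\cdot$ of $\tilde H_k$. Here I would unwind that product as the image of the convolution of $H^\circ$ under the twist $F(D^{1,3})^k$ with $F=\R^{1,3}\R^{1,4}$: spelling out the $H^{[2],coop}$-action $(x\ot y)\lef f=f(S^{-1}(x)\,\cdot\,y)$ and convolving, the value $(f\cdot g)(h)$ becomes a sum of terms $f\big(S^{-1}(w_1)h_{(1)}\big)\,g\big(S^{-1}(w_3)h_{(2)}w_4\big)$ indexed by the legs $w_1,w_3,w_4$ of $F(D^{1,3})^k$ (the $H$-leg of the first copy of $H^e$, i.e. slot $2$, does not occur in the twist and so acts trivially). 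Substituting $h=h_a$, summing against $b_a$, and recognising $\sum_a b_a\ot(h_a)_{(1)}\ot(h_a)_{(2)}=(\id\ot\Delta)(X_B)=X_B^{0,12}$, this exhibits $\phi_B(f\cdot g)$ as $(\id\ot f\ot g)$ applied to $X_B^{0,12}$ dressed by the $\R$- and $D$-legs coming from the twist.

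At this point I would invoke relation \eqref{eq:reflec} for $X_B$, namely $X_B^{0,12}=D^k(\R^{1,2})^{-1}X_B^{0,2}\R^{1,2}X_B^{0,1}$, which Proposition~\ref{prop:DKM1} supplies and which is precisely the reflection equation with this twist built in. The $D^k$ and $\R^{\pm1}$ prefactors produced by \eqref{eq:reflec} are designed to cancel the dressing coming from $F(D^{1,3})^k$, since \eqref{eq:reflec} is itself obtained by applying that twist to the canonical element; after this cancellation only $X_B^{0,1}X_B^{0,2}$ survives, so that $\phi_B(f\cdot g)=(\id\ot f\ot g)(X_B^{0,1}X_B^{0,2})=\phi_B(f)\phi_B(g)$. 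The main obstacle is exactly this cancellation: keeping track of which leg of $\R$ and of $D$ lands in which tensor slot, and of the inverse antipodes $S^{-1}$, so that the twist factors and the factors in \eqref{eq:reflec} match up. For unitality, $\phi_B(1)=(\id\ot\epsilon)(X_B)$; applying $\id\ot\epsilon$ to the slots of \eqref{eq:reflec} shows this element is idempotent and acts as a two-sided unit on $X_B$, hence equals $1_B$ under the standing normalisation of solutions. As a consistency check, at $k=0$ the computation reduces to the untwisted Theorem~\ref{REuniv} (\cite{Donin2003a}, Prop.~10); alternatively one could deduce general $k$ from $k=0$, since the twists $F(D^{1,3})^k$ differ only by powers of the central double braiding $D$.
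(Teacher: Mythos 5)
Your strategy is the right one and is, in effect, the paper's: the paper gives no computation for this proposition, deferring to \cite{Donin2003a} for the case $k=0$ and remarking that the general case is similar (equivalently, reducible to $k=0$ because the twists differ by powers of the central double braiding $D$ -- the alternative you mention at the end). Your verification of the intertwining property and of uniqueness is complete, and your reformulation of multiplicativity -- that the twisted convolution product of $\tilde H_k$, evaluated on $X_B$, is governed by the ``matrix'' relation \eqref{eq:reflec} -- is exactly the mechanism the paper itself deploys in its proof of the elliptic analogue, Theorem~\ref{EHrelns}.

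The reservation is that the step you yourself call the heart of the argument is asserted rather than performed, and the assertion that the prefactors in \eqref{eq:reflec} are ``designed to cancel the dressing'' is not a tautology. Concretely: writing $X_B=\sum\beta\ot\eta=\sum\beta'\ot\eta'$, the slot-$0$ content of the right-hand side of \eqref{eq:reflec} is $\beta'\beta$ (the contribution of $X_B^{0,2}$ precedes that of $X_B^{0,1}$), whereas $\phi_B(f)\phi_B(g)$ has slot-$0$ content $\beta\beta'$; it is the conjugating $\R^{\pm 1}$ together with the twist legs, manipulated via the identities \eqref{eq:R1} and \eqref{eq:R2} and the reflection equation itself, that repair this reordering. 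That leg-chasing is precisely the content of the paper's proof of Theorem~\ref{EHrelns}, so it cannot be dismissed as automatic, although it does go through. Separately, your unitality argument only shows that $e=(\id\ot\epsilon)(X_B)$ acts as a two-sided unit on the legs of $X_B$; since $X_B=0$ satisfies \eqref{eq:reflec}, the equation alone does not force $e=1_B$, so the ``standing normalisation'' you invoke (invertibility of $X_B$, or restricting to the subalgebra its legs generate) is doing real work and should be stated explicitly rather than left implicit.
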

Propositions~\ref{prop:DKM1} and ~\ref{prop:DKM2} are proved in~\cite{Donin2003a} in the case $k=0$. The general proof is similar. Note that the fact that these axioms all leads to the same reflection equation, regardless of the value of $k$, essentially follows from the fact that the left hand side of~\eqref{eq:reflec} is invariant under conjugation by $D$.

Let $u=m((S\ot \id)(R^{2,1}))$ where $m$ is the multiplication of $H$. Then $\nu=uS(u)$ is central and satisfies
\[
\Delta(\nu)=D^{-2}(\nu \ot \nu)
\]
implying that
\[
D^{k-2}=\Delta(\nu)D^k (\nu^{-1}\ot \nu^{-1})
\]
meaning that $D^{k-2}$ and $D^k$ are equivalent. Therefore, they lead to isomorphic tensor functors, from which follows the following:
\begin{prop}
For any $k \in \Z$, the algebras $\tilde H_k$ and $\tilde H_{k+2}$ are isomorphic.
\end{prop}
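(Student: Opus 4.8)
The plan is to deduce the isomorphism from the general principle, recorded in the Proposition on twists above, that equivalent twists induce isomorphic tensor functors, combined with the observation that $\tilde H_k$ and $\tilde H_{k+2}$ arise as the images of one and the same algebra $H^{\circ}$ under the two tensor functors $T_k,T_{k+2}\colon H^e\modu\to H^{[2],coop}\modu$ attached to the twists $F(D^{1,3})^k$ and $F(D^{1,3})^{k+2}$. Since a monoidal natural isomorphism $T_k\Rightarrow T_{k+2}$ carries each algebra object to an isomorphic algebra object, and since the underlying algebra of $T_k(H^{\circ})$ is by definition $\tilde H_k$, it suffices to prove that $F(D^{1,3})^k$ and $F(D^{1,3})^{k+2}$ are equivalent twists of $H^e$.

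For this I would use the central element $\nu=uS(u)$, which satisfies $\epsilon(\nu)=1$ and, crucially, $\Delta(\nu)=D^{-2}(\nu\ot\nu)$. Since $\nu$ is central the factor $\nu\ot\nu$ commutes with everything, and since $D$ commutes with the image of the coproduct one has $D^k\Delta(\nu)=\Delta(\nu)D^k$; rearranging the displayed identity then gives $D^{k+2}=\Delta(\nu^{-1})\,D^k\,(\nu\ot\nu)$, so that $D^k$ and $D^{k+2}$ are equivalent twists of $H$ with conjugating element $\nu^{-1}$. Transporting $\nu$ into the tensor factor of $H^e$ carrying the indices $1,3$ (the two $H^{coop}$-legs on which $D^{1,3}$ acts) produces a candidate conjugating element $\xi\in H^e$ with $\epsilon(\xi)=1$, and the identity above becomes the statement that conjugating $(D^{1,3})^k$ by $\Delta_{H^e}(\xi)$ and $(\xi^{-1}\ot\xi^{-1})$ shifts the power of $D^{1,3}$ by $2$.

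The main obstacle is the bookkeeping needed to upgrade this to the full equivalence $F(D^{1,3})^{k+2}=\Delta_{H^e}(\xi)\,F(D^{1,3})^k\,(\xi^{-1}\ot\xi^{-1})$. Two points require care: first, because the relevant legs lie in $H^{coop}$, the coproduct $\Delta_{H^e}(\xi)$ is computed with the opposite comultiplication, so one must track the resulting flip $\tau$ and the replacement of $D$ by $D^{op}=\R\,\R^{2,1}$ and check that it still pairs correctly against $D^{1,3}$; second, one must verify that conjugation by $\xi$ leaves the factor $F=\R^{1,3}\R^{1,4}$ unchanged. Both reduce to the centrality of $\nu$ and its grouplike-up-to-$D^{-2}$ property, so I expect them to be routine once set up correctly, but this is the only genuinely computational step. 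With the equivalence in hand, the Proposition on twists supplies the isomorphic tensor functors and hence $\tilde H_k\cong\tilde H_{k+2}$; alternatively, one can read off the isomorphism explicitly as the map induced by the action of $\nu$ and check directly that it intertwines the two multiplications using the module-algebra formula $(x\ot y)\lef f=(u\mapsto f(S^{-1}(x)uy))$.
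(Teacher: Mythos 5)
Your argument is essentially the paper's own proof: the paper likewise introduces the central element $\nu=uS(u)$, uses $\Delta(\nu)=D^{-2}(\nu\ot\nu)$ to conclude that $D^{k}$ and $D^{k\pm 2}$ are equivalent twists, and then invokes the proposition that equivalent twists induce isomorphic tensor functors to obtain $\tilde H_k\cong\tilde H_{k+2}$. The only difference is that you explicitly flag the bookkeeping needed to transport this equivalence from $H$ to the twist $F(D^{1,3})^k$ of $H^e$ (the $H^{coop}$ flip and the compatibility with $F$), which the paper leaves implicit.
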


Therefore, it is enough to consider $\tilde H_0$ and $\tilde H_1$. Moreover, if $H$ is a ribbon Hopf algebra, then by definition $\nu$ admits a central square root implying by a similar argument:
\begin{prop}
If $H$ is a ribbon Hopf algebra then all the $\tilde H_k$ are isomorphic.
\end{prop}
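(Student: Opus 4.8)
The plan is to mimic, essentially verbatim, the twist--equivalence argument that establishes the immediately preceding proposition, but with the central element $\nu$ replaced by a central square root. Recall that by definition a ribbon Hopf algebra carries a central, invertible \emph{ribbon element} $v$ with $\epsilon(v)=1$, $v^2=uS(u)=\nu$, and coproduct
\[
\Delta(v)=D^{-1}(v\ot v),
\]
where as before $D=\R^{2,1}\R^{1,2}$ is the double braiding. This is precisely the ``central square root of $\nu$'' alluded to before the statement: squaring the displayed relation and using centrality of $v$ gives $\Delta(v)^2=D^{-2}(v\ot v)^2=D^{-2}(\nu\ot\nu)=\Delta(\nu)$, so $v$ is compatible with the relation $\Delta(\nu)=D^{-2}(\nu\ot\nu)$ used in the previous proof. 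Since $v$ is invertible and counital, it is a legitimate candidate for an equivalence of twists.

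The key computation is then the analog, one power at a time, of the identity $D^{k-2}=\Delta(\nu)D^{k}(\nu^{-1}\ot\nu^{-1})$. Using that $v$ is central, so that $v\ot v$ commutes with $D^{k}\in H\ot H$, together with the coproduct formula above, one obtains
\[
\Delta(v)\,D^{k}\,(v^{-1}\ot v^{-1})=D^{-1}(v\ot v)(v^{-1}\ot v^{-1})D^{k}=D^{k-1}.
\]
Thus $v$ exhibits $D^{k-1}$ and $D^{k}$ as equivalent twists, in exactly the same sense in which $\nu$ exhibited $D^{k-2}$ and $D^{k}$ as equivalent in the preceding proposition; taking the square of this equivalence recovers that earlier statement.

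From here I would conclude as before: transporting to the twist $F(D^{1,3})^{k}\in(H^e)^{\ot 2}$ defining $\tilde H_k$, the equivalence $D^{k-1}\sim D^{k}$ shows that $F(D^{1,3})^{k-1}$ and $F(D^{1,3})^{k}$ are equivalent twists of $H^e$, hence induce isomorphic tensor functors $H^e\modu\to H^{[2],coop}\modu$, and therefore isomorphic algebra images, i.e. $\tilde H_{k-1}\cong\tilde H_{k}$ for every $k\in\Z$. Since now \emph{consecutive} twisted braided duals are isomorphic, an immediate induction gives $\tilde H_k\cong\tilde H_0$ for all $k$, which is the claim. The only point requiring any care is the transport in this last step, namely that the $H\ot H$-level equivalence lifts to a genuine equivalence of the four-slot twists $F(D^{1,3})^{k}$; but this is identical to the bookkeeping already carried out for the $k\rightsquigarrow k+2$ case, with $v$ in place of $\nu$, so no new difficulty arises. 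Indeed the entire content beyond the previous proposition is the one-line computation above, which is why the step deserves no more than the label ``a similar argument.''
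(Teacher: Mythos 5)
Your proposal is correct and is precisely the ``similar argument'' the paper invokes: replace $\nu$ by the ribbon element $v$ with $v^2=\nu$ and $\Delta(v)=D^{-1}(v\ot v)$, deduce that $D^{k-1}$ and $D^{k}$ are equivalent twists, and conclude $\tilde H_{k-1}\cong\tilde H_k$ for all $k$. The paper leaves all of this implicit, so you have simply spelled out the intended proof; no discrepancy.
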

\begin{rmk}
For any $k$, equation~\eqref{eq:reflec} plays the same role in the reflection equation, as the hexagon axiom in the Yang-Baxter equation, encoding some kind of compatibility with the tensor product of $H$-modules.  Topologically, it corresponds to a ``strand doubling" operation for the additional generator of the braid group of the punctured plane. Formally, such an operation depends on the choice of a framing, while a ribbon element removes the dependence on the framing.
\end{rmk}
%}}}
\section{The elliptic double}\label{sec:ell}%{{{
Let $T$ denote the following element in $(H^{[2],coop})^{\ot 2}$, which we identify as a vector space with $H^{\ot 4}$:
\[
	T=(\R^{3,2})^{-1}(\R^{3,1})^{-1} (\R^{4,2})^{-1}\R^{1,4}.
\]
\begin{prop}\label{prop:hexa}
	The element $T$ satisfies the hexagon axioms
\begin{align*}
(\id \ot \Delta_{H^{[2],coop}})T & = T^{1,3}T^{1,2} & (\Delta_{H^{[2],coop}}\ot \id)T=T^{1,3}T^{2,3}
\end{align*}
in $(H^{[2],coop})^{\ot 3}$.
\end{prop}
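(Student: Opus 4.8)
The plan is to observe that the two displayed identities are exactly the two hexagon (quasitriangularity) axioms $(\id\ot\Delta)\R=\R^{1,3}\R^{1,2}$ and $(\Delta\ot\id)\R=\R^{1,3}\R^{2,3}$ satisfied by a universal $R$-matrix, and hence to prove the proposition by exhibiting $T$ as a quasitriangular structure on the Hopf algebra $H^{[2],coop}$. Once $T$ is identified with an $R$-matrix of $H^{[2],coop}$, both hexagons hold by definition and nothing further is required; the whole proof is thereby reduced to a single identification, together with the standard facts that tensoring and twisting preserve quasitriangularity.

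First I would record that $H^e=H^{coop}\ot H$ is quasitriangular, being a tensor product of the quasitriangular Hopf algebras $H^{coop}$ and $H$. Here $H$ carries both its own structure $\R$ and its ``second'' structure $(\R^{2,1})^{-1}$, while $H^{coop}$ carries $\R^{2,1}$ and correspondingly $\R^{-1}$; recall the standard fact that if $\R_A$ is a universal $R$-matrix then so is $((\R_A)^{2,1})^{-1}$. Writing $(H^e)^{\ot2}\cong H^{\ot4}$ with the two $H^{coop}$-slots labelled $1,3$ and the two $H$-slots labelled $2,4$, the product quasitriangular structure built from these second structures is $\R'_{H^e}=(\R^{1,3})^{-1}(\R^{4,2})^{-1}$. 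Selecting this structure, rather than the ``first'' one, is exactly what is needed below and is the first point requiring care.

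Next I would use Drinfeld's twisting theorem in the form dictated by the paper's convention $\Delta^F(x)=F^{-1}\Delta(x)F$: a twist $F$ sends a quasitriangular $(A,\R_A)$ to a quasitriangular $(A^F,\R_A^F)$ with $\R_A^F=(F^{2,1})^{-1}\R_A F$, the formula one obtains by applying $\tau$ to $\Delta^F(x)=F^{-1}\Delta(x)F$ and comparing with $\Delta^{F,op}(x)=\R_A^F\Delta^F(x)(\R_A^F)^{-1}$. Since $H^{[2],coop}=(H^e)^F$ with $F=\R^{1,3}\R^{1,4}$, and hence $F^{2,1}=\R^{3,1}\R^{3,2}$, the induced quasitriangular structure on $H^{[2],coop}$ is
\[
(\R'_{H^e})^F=(\R^{3,2})^{-1}(\R^{3,1})^{-1}(\R^{1,3})^{-1}(\R^{4,2})^{-1}\R^{1,3}\R^{1,4}.
\]
Since $(\R^{4,2})^{-1}$ commutes with $\R^{1,3}$, the pair $(\R^{1,3})^{-1}\R^{1,3}$ cancels and leaves precisely $T=(\R^{3,2})^{-1}(\R^{3,1})^{-1}(\R^{4,2})^{-1}\R^{1,4}$. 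This identifies $T$ as a universal $R$-matrix of $H^{[2],coop}$, and the two hexagon identities (which live at the level of $H^{[2],coop}$, with no need to expand into $H^{\ot6}$) follow at once.

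The main obstacle is bookkeeping rather than ideas: one must select the correct of the two quasitriangular structures on each tensor factor so that the twist lands exactly on $T$, and keep precise track of slot labels, inverses, and flips through the twist formula in the paper's sign convention. A secondary point worth stating is that when $H$ is infinite dimensional the element $T$ and the various $R$-matrices live in a suitable completion of $H^{\ot4}$, so all of the above manipulations are to be read there; this is routine but should be flagged for rigour.
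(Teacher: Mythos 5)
Your proof is correct, but it takes a genuinely different route from the paper's. The paper disposes of this proposition by ``straightforward computation with the Yang--Baxter equation'', presented as a braid-diagram identity: both sides are expanded in $H^{\ot 6}$ and the equality is read off as a braid isotopy. You instead identify $T$ as an honest universal $R$-matrix for $H^{[2],coop}$, obtained by twisting the product quasitriangular structure $(\R^{1,3})^{-1}(\R^{4,2})^{-1}$ on $H^e$ (assembled from the ``second'' structures $\R^{-1}$ on $H^{coop}$ and $(\R^{2,1})^{-1}$ on $H$) by $F=\R^{1,3}\R^{1,4}$, and then invoke Drinfeld's theorem that twisting preserves quasitriangularity. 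I checked your bookkeeping: $(F^{2,1})^{-1}=(\R^{3,2})^{-1}(\R^{3,1})^{-1}$, and after cancelling $(\R^{1,3})^{-1}\cdots\R^{1,3}$ across the commuting factor $(\R^{4,2})^{-1}$ one lands exactly on $T$, so the identification is right, and the hexagons follow since they are part of quasitriangularity. Your argument is more structural and in fact yields more than is claimed --- it also shows that $T$ intertwines $\Delta_{H^{[2],coop}}$ with its opposite, i.e.\ that $T$ is a genuine braiding and $E_H^{(k)}$ a braided tensor square in the strong sense --- at the cost of leaning on the paper's (asserted but unproved) identification $H^{[2],coop}=(H^e)^F$ and on the standard facts about second $R$-structures, tensor products of quasitriangular Hopf algebras, and twists. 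The paper's computation is self-contained and topologically transparent, which matches its later use of these relations for the punctured torus braid group; your completion caveat for infinite-dimensional $H$ is appropriate and consistent with the paper's own conventions.
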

\begin{proof} This is a straightforward computation with the Yang-Baxter equation.  The computation is depicted in braids in Figure \ref{hex-reln}.
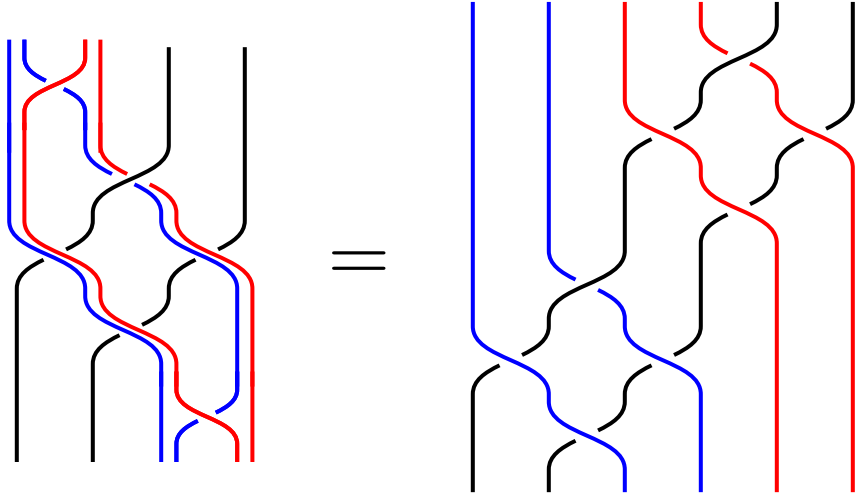
\begin{figure}[h]
\centering

\begin{tikzpicture}

\braid[number of strands=6,
 line width=1.5pt,
 style strands={1,2,5,6}{opacity=0},
] (braid2) at (0,-.1) s_1 s_2^{-1}  s_1-s_3  s_2 s_5 ;
\braid[number of strands=6,
 line width=1.5pt,
 style strands={1,2}{blue},
 style strands={3,4,5,6}{opacity=0}
] (braid2) at (-.1,-1.1) s_2^{-1} s_1-s_3 s_2 ;
\braid[number of strands=6,
 line width=1.5pt,
 style strands={3,4,5,6}{opacity=0},
 style strands={1,2}{red}
] (braid2) at (.1,-1.1) s_2^{-1} s_1-s_3 s_2 ;

\braid[number of strands=6,
 line width=1.5pt,
 style strands={1}{blue},
 style strands={2,3,4,5,6}{opacity=0}
] (braid2) at (-.1,0) s_5 ;
\braid[number of strands=6,
 line width=1.5pt,
 style strands={1,3,4,5,6}{opacity=0},
 style strands={2}{red}
] (braid2) at (.1,0) s_5 ;
\braid[scale=0.8, number of strands=6,
 line width=1.5pt,
 style strands={1}{blue},
 style strands={2,3,4,5,6}{opacity=0}
] (braid2) at (2.375,-5.5) s_5 ;
\braid[scale=0.8,number of strands=6,
 line width=1.5pt,
 style strands={1,3,4,5,6}{opacity=0},
 style strands={2}{red}
] (braid2) at (2.875,-5.5) s_5 ;

\braid[scale=0.8, line width=1.5pt, style strands={1}{red}, style strands={2}{blue}] (braid3) at (2.625,-5.5) s_1;
\braid[scale=0.8, line width=1.5pt, style strands={2}{red}, style strands={1}{blue}] (braid3) at (0.125,0) s_1^{-1};
\braid[scale=0.8, line width=1.5pt, style strands={1}{red}, style strands={2}{blue}] (braid3) at (2.625,-5.5) s_1;
\braid[scale=0.8, line width=1.5pt, style strands={2}{red}, style strands={1}{blue}] (braid3) at (0.125,0) s_1^{-1};

\node[scale=3] at (4.5,-3) {=};
\braid[
 line width=1.5pt,
 style strands={1,2}{blue},
 style strands={3,4}{red},
] (braid2) at (6,.5) s_4^{-1} s_3-s_5 s_4 s_2^{-1} s_1-s_3 s_2 ;
\end{tikzpicture}
\caption{A braid diagram proof of $(\id\ot\Delta)(T)=T_{1,3}T_{1,2}$.}\label{hex-reln}
\end{figure}

\end{proof}
\begin{cor}\label{EHk-cor}
The vector space $\tilde{H}_k^{\ot 2}$ carries an associative multiplication, in which $\tilde{H}_k \ot 1$ and $1\ot \tilde{H}_k$ are sub-algebras, and the cross relations are given by
\[
(1\ot g)(f\ot 1) = T \rhd (f\ot g).
\]
\end{cor}
While this is well-known, we include a proof here for the reader's convenience.
\begin{proof}
It suffices to check associativity on pure tensors in $\tilde{H}_k$, as these span all of $E_H^{(k)}$.  Since $\tilde{H}_k$ is an associative algebra, the only types of expressions on which it remains to check associativity are of the form $(1\ot g) \ot (1 \ot h) \ot (f\ot 1)$ and $(1\ot g)\ot (h \ot 1) \ot (f \ot 1)$.  Write $T=\sum t_i\ot t_i'$.  For the first case, we have:
	\begin{multline*}
	(m \circ (m\ot \id))((1\ot g) \ot (1 \ot h) \ot (f\ot 1)) \\=\sum_i\sum_j ((t_it_j)\rhd f) \ot (t'_i\rhd g)(t'_j \rhd h) = \sum ((t_i\rhd f)\ot\Delta(t_i)\rhd gh),
	\end{multline*}
so that associativity follows from the second equation in Proposition~\ref{prop:hexa}.  The second case follows similarly.
\end{proof}

\begin{defi} 
We denote by $E_H^{(k)}$ the algebra given by Corollary~\ref{EHk-cor}.
\end{defi}
%Since $\tilde H_k$ is a $H^{[2], coop}$-module algebra, one can make the following definition:
%\begin{defi}
%	The $k$th \emph{elliptic double} $E_H^{(k)}$ of $H$ is the braided tensor square of $\tilde H_k$ with respect to $T$. Explicitly, it is $\tilde H_k^{\ot 2}$ as a vector space, $\tilde H_k \ot 1$ and $1 \ot \tilde H_k$ are subalgebras and the cross relations are given by
%	\[
%		(1\ot g)(f \ot 1)= T\lef (f\ot g).
%	\]
%\end{defi}
%The fact that $E_H^{(k)}$ is indeed an associative algebra follows from the hexagon axioms.
Choose a basis $(e_i)_{i\in I}$ of $H$ and define $X,Y\in E_H^{(k)}\ot H$ by
\[
X=\sum e^i \ot 1 \ot e_i,\qquad Y=\sum 1\ot e^i \ot e_i,
\]
where we use the vector space identification $E_H^{(k)} \cong \tilde H^{\ot 2}$. The main result of this section is the following:
\begin{thm}\label{EHrelns}
	The cross relations of $E_H$ are equivalent to the commutation relation in $E_H\ot H^{\ot 2}$ for $X,Y,\R$:
	\begin{equation*}
		X^{0,1}\R^{2,1}Y^{0,2}=\R^{2,1}Y^{0,2}\R^{1,2}X^{0,1}\R^{2,1}.
	\end{equation*}
\end{thm}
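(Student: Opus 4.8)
The plan is to convert the relation \eqref{eq:ellCom}, an identity in $E_H\ot H\ot H$, into an identity in $E_H$ by contracting the two $H$-legs, and then to recognise the outcome as the defining cross relation $(1\ot g)(f\ot 1)=T\lef(f\ot g)$. Two elementary facts about the canonical elements drive the computation. First, contraction recovers functionals: for $\phi\in H^\circ\cong\tilde H$ one has $(\id\ot\phi)(X)=\phi\ot 1$ and $(\id\ot\phi)(Y)=1\ot\phi$. Second, multiplication on the $H$-leg transports to the module action on the $\tilde H$-leg; concretely, for $h,h'\in H$, $\sum_i e^i\ot h e_i h'=\sum_i\big((S(h)\ot h')\lef e^i\big)\ot e_i$, where the left multiplication by $h$ is absorbed via the antipode built into $(x\ot y)\lef\phi=\phi(S^{-1}(x)\,\cdot\,y)$. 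These let me move every $\R$-factor appearing in \eqref{eq:ellCom} off the $H$-legs and onto the functionals.

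First I would rewrite \eqref{eq:ellCom} in the rearranged form
\[
Y^{0,2}\R^{1,2}X^{0,1}=(\R^{2,1})^{-1}X^{0,1}\R^{2,1}Y^{0,2}(\R^{2,1})^{-1},
\]
obtained by multiplying on both sides by $(\R^{2,1})^{-1}$. The point of this presentation is that the $E_H$-component of the left-hand side is the \emph{out-of-order} product $(1\ot e^j)(e^i\ot 1)$, whereas that of the right-hand side is the \emph{straight} product $(e^i\ot 1)(1\ot e^j)=e^i\ot e^j$, which needs no cross relation. Pairing both sides against arbitrary $f,g\in H^\circ$ on legs $1$ and $2$ and contracting the canonical indices with the two facts above, the left-hand side collapses to $\sum_t(1\ot g_{\beta_t})(f_{\alpha_t}\ot 1)$, where $\R^{1,2}=\sum_t\alpha_t\ot\beta_t$ and $f_{\alpha_t},g_{\beta_t}$ denote the functionals $w\mapsto f(\alpha_t w)$ and $w\mapsto g(w\beta_t)$; the right-hand side collapses to an explicit pure tensor $\sum f'\ot g'\in\tilde H\ot\tilde H$, with $f',g'$ obtained from $f,g$ by multiplication against the legs of $\R^{2,1}$ and $(\R^{2,1})^{-1}$.

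Substituting the cross relation into the left-hand side then turns \eqref{eq:ellCom} into the purely $\R$-theoretic identity $\sum_t T\lef(f_{\alpha_t}\ot g_{\beta_t})=\sum f'\ot g'$, which no longer refers to the algebra $E_H$. I would prove this by expanding $T=(\R^{3,2})^{-1}(\R^{3,1})^{-1}(\R^{4,2})^{-1}\R^{1,4}$, writing out the action of its four legs on $f$ (legs $1,2$, with the $S^{-1}$) and on $g$ (legs $3,4$), and reducing both sides using the Yang--Baxter equation together with the antipode axioms $(S\ot\id)\R=\R^{-1}$ and $(\id\ot S^{-1})\R=\R^{-1}$; a count of $\R$-factors ($5$ on the left versus $3$ on the right) shows that cancellations of the form $\R\R^{-1}=1$ must combine with the hexagon/Yang--Baxter moves to land in the required positions. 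Granting this identity, \eqref{eq:ellCom} becomes the cross relation evaluated at $(f_{\alpha_t},g_{\beta_t})$; since $\R$ is invertible the operator $(f,g)\mapsto\sum_t f_{\alpha_t}\ot g_{\beta_t}$ on $\tilde H\ot\tilde H$ is invertible, so this is equivalent to the cross relation for all $f,g$, and finally---because contraction against all functionals is faithful---to \eqref{eq:ellCom} itself.

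The main obstacle is the $\R$-matrix bookkeeping in the previous paragraph: one must verify that the three explicit $\R$-factors of \eqref{eq:ellCom}, once combined with the antipodes produced by transporting left multiplications, reassemble \emph{exactly} into the two $H^{[2],coop}$-factors of $T$, with the correct placement of inverses and of $S^{-1}$. Because the hexagon identity for $T$ was already established diagrammatically, I expect the most transparent route is a parallel braid-diagram computation: represent $X$ and $Y$ as the strand-doubling caps of the two handle generators, draw the two sides of \eqref{eq:ellCom} and of the $T$-crossed product as braid words, and read off their equality from planar isotopy and the Reidemeister~III (Yang--Baxter) moves, thereby avoiding the index juggling altogether.
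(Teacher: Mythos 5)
Your proposal is correct and follows essentially the same route as the paper's proof: both package the cross relations into the single ``matrix'' identity $Y^{0,2}X^{0,1}=T\lef_0 X^{0,1}Y^{0,2}$, transport the four legs of $T$ from the $\tilde H$-factors to the $H$-factors via the duality $\sum \left((x\ot y)\lef e^i\right)\ot e_i=\sum e^i\ot S^{-1}(x)e_iy$, and then match against the three $\R$-factors of \eqref{eq:ellCom} using $(S\ot\id)(\R)=\R^{-1}$ and $\sum S^{-1}(r_1)r_1'\ot r_2'r_2=1$. The only differences are direction and completeness: you contract \eqref{eq:ellCom} toward the cross relation rather than the reverse, and you leave the central $\R$-matrix bookkeeping as a described plan, but the identity you defer is precisely the explicit coordinate calculation the paper carries out, so it does go through.
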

\begin{proof}
By definition every element $f \in \tilde H_k$ can be written as
\[
	f=\sum e^if(e_i)
\]
hence the product $gf$ in $E_H^{(k)}$ is obtained by applying $(\id_{E_H^{(k)}}\ot f \ot g)$ to
\[
	Y^{0,2}X^{0,1}
\]
and $fg$ by applying the same element to 
\[
X^{0,1}Y^{0,2}.
\]
Therefore all commutations relation can be gathered into a ``matrix" equation
\begin{equation}\label{eq:matEll}
	Y^{0,2}X^{0,1}= T\lef_0 X^{0,1}Y^{0,2}
\end{equation}
where $T$ acts on the $E_H^{(k)}$ (i.e. 0th) component. We recall the following identities:
\begin{align}\label{eq:R1}
	\R^{-1}&=(S\ot \id)(\R)=(\id \ot S^{-1})(\R)  .
\end{align}
Applying $S^{-1}$ to the first factor of the relation $(S \ot \id)(R)R=1$, setting $\R=\sum r_1 \ot r_2 =\sum r_1'\otimes r_2'$ -- using apostrophes to distinguish between copies of $\R$ --  one has the following useful identity (note the order of the terms):
\begin{equation}\label{eq:R2}
\sum S^{-1}(r_1)r_1' \ot r_2'r_2=1.
\end{equation}
Then equation~\eqref{eq:matEll} reads, in coordinates:
\begin{multline}\label{eq:temp}
\left((1\ot e^j)(e^i \ot 1)\right) \ot e_i \ot e_j\\=  \left( (r_2r_1' \ot r_2'''' r_2'' \ot S(r_1'''')S(r_1) \ot S(r_1'')r_2')\lef e^i \ot e^j \right ) \ot e_i \ot e_j.
\end{multline}

The left $H^{[2]}$ action on $\tilde H_k$ is by definition dual to the right $H^{[2]}$ action on $H$, therefore:
\[
	\sum ((x \ot y)\lef e^i) \ot e_i= \sum e^i \ot S^{-1}(x)e_iy
\]
Using this, equation~\eqref{eq:temp} can be rewritten
\[
\left((1\ot e^j)(e^i \ot 1)\right) \ot e_i \ot e_j=  e^i \ot e^j  \ot S^{-1}(r_1')S^{-1}(r_2)e_ir_2''''r_2'' \ot r_1r_1''''e_jS(r_1'')r_2'.
\]
Then, using the $R$-matrix relations~\eqref{eq:R1} and \eqref{eq:R2} to move elements from the right hand side to the left hand side (and reassigning apostrophes for the sake of clarity) we obtain:
\[
\left((1\ot e^j)(e^i \ot 1)\right) \ot r_2r_1'e_ir_2'' \ot r_1e_jr_2'r_1''=  e^i \ot e^j  \ot e_ir_2 \ot r_1e_j
\]
which is exactly~\eqref{eq:ellCom}.
\end{proof}

\begin{rmk}\label{AGS-remark} 
	If $H$ is semi-simple, then as a vector space $\tilde H_k\cong H^{\circ}$ has a Peter-Weyl decomposition
	\[
		\tilde H_k=\bigoplus V^* \ot V
	\]
where the sum is over representatives of finite dimensional simple $H$-modules. Under this identification, the relations of Theorem \ref{EHrelns} coincide with those of the graph algebra of the punctured torus of~\cite[Def. 12]{Alekseev1993}.\end{rmk}

Equation~\eqref{eq:ellCom} is a defining relation for $E_H^{(k)}$, in the following sense:
\begin{cor}\label{cor:universal}
Let $B$ be an algebra, and $X_B,Y_B \in B \ot H$ satisfying both the axiom~\eqref{eq:reflec} and equation~\eqref{eq:ellCom} (with $X$ and $Y$ replaced by $X_B$ and $Y_B$). Then there exists a unique algebra morphism
\[
	\phi_B:E_H^{(k)} \longrightarrow B
\]
such that $X_B=(\phi_B \ot \id)(X)$ and $Y_B=(\phi_B \ot \id)(Y)$. Explicitly, $\phi_B$ is given by
\begin{align*}
\phi_B(x\ot 1)&= (\id \ot x)(X_B)&\phi_B(1\ot x)&=(\id \ot x)(Y_B).
\end{align*}
\end{cor}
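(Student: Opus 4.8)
The plan is to assemble $\phi_B$ out of the two maps that $X_B$ and $Y_B$ produce separately via Proposition~\ref{prop:DKM2}, and then to check that the only genuinely new relation of the crossed product $E_H^{(k)}=\tilde H_k\ot_T\tilde H_k$---its cross relation---is respected precisely because $X_B,Y_B$ together satisfy \eqref{eq:ellCom}.

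First I would apply Proposition~\ref{prop:DKM2} to each of $X_B$ and $Y_B$, which satisfy \eqref{eq:reflec} by hypothesis, obtaining unique algebra morphisms $\phi_X,\phi_Y\colon\tilde H_k\to B$ with $\phi_X(f)=(\id\ot f)(X_B)$ and $\phi_Y(f)=(\id\ot f)(Y_B)$. Any $\phi_B$ as in the statement must restrict to $\phi_X$ on $\tilde H_k\ot 1$ and to $\phi_Y$ on $1\ot\tilde H_k$; as these subalgebras generate $E_H^{(k)}$, this already yields uniqueness and forces the displayed formula. For existence it then remains only to show that the linear map $\phi_B(x\ot y):=\phi_X(x)\phi_Y(y)$ is multiplicative.

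Second, I would use the crossed-product structure to reduce multiplicativity to a single identity. Putting a product $(x_1\ot y_1)(x_2\ot y_2)$ into normal form by rewriting $(1\ot y_1)(x_2\ot 1)=T\lef(x_2\ot y_1)$, and using that $\phi_X$ and $\phi_Y$ are already morphisms, the equation $\phi_B(ab)=\phi_B(a)\phi_B(b)$ collapses to the cross-relation compatibility $\phi_B\bigl((1\ot g)(f\ot 1)\bigr)=\phi_Y(g)\phi_X(f)$ for all $f,g\in\tilde H_k$. Pairing against the basis $\{e_i\ot e_j\}$ of $H^{\ot 2}$, this is in turn equivalent to the matrix identity $(\phi_B\ot\id\ot\id)(Y^{0,2}X^{0,1})=Y_B^{0,2}X_B^{0,1}$ in $B\ot H^{\ot 2}$; note that its companion $(\phi_B\ot\id\ot\id)(X^{0,1}Y^{0,2})=X_B^{0,1}Y_B^{0,2}$ is automatic, because $X^{0,1}Y^{0,2}$ requires no reordering and $\phi_B(e^i\ot e^j)=\phi_X(e^i)\phi_Y(e^j)$ by definition.

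The heart of the matter is to transport the computation of Theorem~\ref{EHrelns} through $\phi_B$. That argument converts \eqref{eq:matEll} into \eqref{eq:ellCom} by a reversible chain of steps of only two kinds: the linear duality rewriting $\sum((x\ot y)\lef e^i)\ot e_i=\sum e^i\ot S^{-1}(x)e_iy$, which pushes the entire $T$-action off the $0$-slot onto the two $H$-slots, and multiplications by the invertible $R$-matrices from \eqref{eq:R1} and \eqref{eq:R2}. Each step commutes with $\phi_B\ot\id\ot\id$: the duality rewriting is a vector-space identity, after which $\phi_B$ acts on the $0$-slot simply by its definition (the plain tensor $e^i\ot e^j$ going to $\phi_X(e^i)\phi_Y(e^j)$), while the $R$-matrix factors live entirely in the $H$-slots and pass through $\phi_B$ untouched. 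Running the identical chain over $B$ therefore shows that the matrix identity above is equivalent to \eqref{eq:ellCom} for $X_B,Y_B$; since the latter holds by hypothesis, cross-relation compatibility follows and $\phi_B$ is an algebra morphism. The step I expect to be the main obstacle is the bookkeeping here: one must verify that every manipulation in the proof of Theorem~\ref{EHrelns} is either a pure $H$-slot operation or the linear duality identity, and that the combined $R$-matrix dressing relating the two sides is invertible so it may be cancelled to recover the undressed identity. Granting this, no new $R$-matrix computation is required, and the corollary follows formally from Theorem~\ref{EHrelns} and Proposition~\ref{prop:DKM2}.
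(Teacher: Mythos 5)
Your proposal is correct and follows exactly the route the paper intends: the corollary is stated there without a separate proof precisely because it is the combination of Proposition~\ref{prop:DKM2} (giving the morphisms on each $\tilde H_k$ factor and forcing uniqueness) with Theorem~\ref{EHrelns} (identifying the cross relation with equation~\eqref{eq:ellCom} via duality rewriting and invertible $R$-matrix manipulations confined to the $H$-slots). Your write-up simply supplies the bookkeeping the paper leaves implicit, and it is sound.
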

%}}}
\section{Braid group and mapping class group actions}%{{{
In this section we construct representations of the punctured torus braid group from $E^{(k)}_H$. %, and we show that $E^{(k)}_H$ satisfies a universal property with respect to this action.
First, we have:
\begin{defi}
The punctured elliptic braid group $B_n(T^2\backslash disc)$ is the fundamental group of the configuration space of $n$ points in $T^2\backslash disc$.
\end{defi}
\begin{prop}
The group $B_n(T^2\backslash disc)$ is generated by $X_1,\dots, X_n,Y_1,\dots,Y_n,\sigma_1,\dots,\sigma_{n-1}$ with relations:
\begin{itemize}
\item the $X_i$'s (resp. $Y_i'$s) pairwise commute,
\item the planar braid relation for the $\sigma_i$'s,
\item the following cross relations:
\begin{align}
X_{i+1}&=\sigma_i X_i \sigma_i& Y_{i+1}=\sigma_i Y_i \sigma_i \\
X_1 Y_2 &= Y_2 X_1 \sigma_1^2
\end{align}
\end{itemize}
\end{prop}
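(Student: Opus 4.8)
The plan is to proceed by induction on $n$ using the Fadell--Neuwirth fibration, adding one point at a time, with base case $n=1$. For $n=1$ the configuration space is $\Sigma:=T^2\backslash disc$ itself, a genus-one surface with one boundary component; it deformation retracts onto a wedge of two circles, so $B_1(T^2\backslash disc)=\pi_1(\Sigma)$ is free of rank two, generated by the loop $X_1$ around one handle cycle and the loop $Y_1$ around the other. This is exactly the proposed presentation for $n=1$ (no $\sigma_i$, no cross relations). Throughout I would keep in mind the geometric meaning of the generators: $X_i$ (resp.\ $Y_i$) carries the $i$-th point once around the $a$-cycle (resp.\ $b$-cycle) of the torus, and $\sigma_i$ is the half-twist exchanging points $i$ and $i+1$. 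This is the picture behind the claim in the introduction that $B_n(T^2\backslash disc)$ is built from two copies of the braid group $B_n(\RR^2\backslash disc)$ of the punctured plane (an annular braid group) — one copy for each cycle — sharing the common braid generators $\sigma_i$.

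For the inductive step I would use the fibration
\[
\Sigma\setminus\{n-1\ \text{points}\}\ \hookrightarrow\ \conf_n(\Sigma)\ \longrightarrow\ \conf_{n-1}(\Sigma)
\]
of ordered configuration spaces obtained by forgetting the last point, whose fiber is an open surface and hence has free fundamental group. Since configuration spaces of $\Sigma$ are aspherical, the homotopy long exact sequence collapses to a short exact sequence $1\to \pi_1(\text{fiber})\to P_n\to P_{n-1}\to 1$ of pure braid groups. A free basis of the fiber group supplies the new generators $X_n,Y_n$ together with the loops linking the new point to the earlier ones (expressible through the $\sigma_i$), and the relations of $P_n$ are generated by those of $P_{n-1}$ together with the monodromy action of $P_{n-1}$ on this free group. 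Reading these off and then incorporating the half-twists $\sigma_i$ — which realize the transpositions in $\sym{n}$ and conjugate $X_i\mapsto X_{i+1}$, $Y_i\mapsto Y_{i+1}$ — yields the three families of relations: pairwise commutativity of the $X_i$ and of the $Y_i$ (loops of distinct points around the \emph{same} cycle can be disjoined); the conjugation relations $X_{i+1}=\sigma_iX_i\sigma_i$, $Y_{i+1}=\sigma_iY_i\sigma_i$; and the single mixed relation. To pass cleanly from the pure group to the full group I would either interleave the extension $1\to P_n\to B_n\to\sym{n}\to 1$ with the split generators $\sigma_i$, or equivalently invoke the known surface braid group presentations of Bellingeri and of Gonz\'alez-Meneses and rewrite them in the variables $X_i,Y_i,\sigma_i$.

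The crux — and the step I expect to be the main obstacle — is pinning down the exact mixed relation $X_1Y_2=Y_2X_1\sigma_1^2$, in particular the precise exponent. Topologically this asserts that the commutator of an $a$-loop of the first point with a $b$-loop of the second is not trivial but equals \emph{one} full twist of the two strands, because the algebraic intersection number of the $a$- and $b$-cycles on $T^2$ is $\pm1$ and the removed disc forces that intersection to be realized by the first point winding once around the second (the pure braid element $\sigma_1^2$). Making this rigorous requires a careful loop-tracking and isotopy computation in $\Sigma$ with two marked points, keeping orientations and the position of the removed disc fixed so that the exponent comes out exactly $2$ rather than $0$ or $\pm2$ with the wrong sign. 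The remaining work is routine: completeness (that no further relations are needed) follows by comparing the extension induced from the presentation with the short exact sequence above, checking that the presented group surjects onto $B_n(T^2\backslash disc)$ and has a pure subgroup of the correct iterated-extension structure dictated by the fibration.
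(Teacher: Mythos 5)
The paper itself gives no proof of this proposition: it is stated as a known presentation of the braid group of the one-holed torus (of the type established by Bellingeri and by Gonz\'alez-Meneses), so there is no internal argument to compare against. Your overall strategy --- induction on $n$ via the Fadell--Neuwirth fibration, asphericity collapsing the long exact sequence to $1\to\pi_1(\mathrm{fiber})\to P_n\to P_{n-1}\to 1$, and then assembling $B_n$ from $P_n$ and $\sym{n}$ using the half-twists --- is exactly the standard route to such presentations, and your base case $B_1(T^2\backslash disc)\cong F_2$ is correct.

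As a proof, however, the proposal has two genuine gaps, and they sit precisely where all the work is. First, the mixed relation $X_1Y_2=Y_2X_1\sigma_1^2$: you rightly identify that the exponent and sign must come from an explicit loop-tracking isotopy in the twice-marked one-holed torus, but you do not perform that computation; without it the exponents $0$ or $-2$ are equally consistent with everything else you say, so the key relation is asserted rather than derived. Second, completeness: the claim that ``the remaining work is routine'' is where the theorem actually lives. To show no further relations are needed you must compute the full monodromy action of $P_{n-1}$ on a free basis of $\pi_1\bigl(\Sigma\setminus\{n-1\ \mathrm{points}\}\bigr)$ --- not only on $X_n,Y_n$ but on the loops encircling the earlier punctures --- and check that every relator so produced is a consequence of the listed ones; this occupies the bulk of the Bellingeri and Gonz\'alez-Meneses papers and is not a formality. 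In particular, one must verify that relations such as $\sigma_jX_1=X_1\sigma_j$ for $j\geq 2$, which certainly hold geometrically, are consequences of the pairwise commutativity of the $X_i$ together with $X_{i+1}=\sigma_iX_i\sigma_i$, or else must be added to the list; your outline does not address this. Your fallback --- deriving the stated presentation from the published ones by an explicit Tietze-transformation change of generators --- is legitimate and is in effect what the paper tacitly relies on, but that rewriting would then have to be carried out rather than invoked.
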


The results of the previous section easily imply:
\begin{thm}\label{braid-actions}
There exists a unique group morphism
\[
\phi: B_n(T^2\backslash disc) \longrightarrow (E_H^{(k)} \ot H^{\ot n})^{\times}\rtimes S_n
\]
given by
$$X_1 \longmapsto X^{0,1},\qquad Y_1 \longmapsto Y^{0,1},\qquad \sigma_i \longmapsto (i,i+1)\R^{i,i+1}.$$
\end{thm}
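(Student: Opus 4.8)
The plan is to read off the proof from the structure of the presentation in the preceding Proposition. The listed relations split into three groups: those involving only the $X_i$ and the $\sigma_j$, those involving only the $Y_i$ and the $\sigma_j$, and the single mixed cross relation $X_1Y_2=Y_2X_1\sigma_1^2$. The first two groups are each exactly a presentation of the punctured planar braid group $B_n(\mathbb{R}^2\backslash disc)$, so $B_n(T^2\backslash disc)$ is assembled from two copies of $B_n(\mathbb{R}^2\backslash disc)$ sharing the braid generators $\sigma_j$ and glued by the mixed relation. Since $X_{i+1}=\sigma_iX_i\sigma_i$ and $Y_{i+1}=\sigma_iY_i\sigma_i$ express every $X_i,Y_i$ through $X_1,Y_1$ and the $\sigma_j$, the values of $\phi$ on generators are forced, giving uniqueness at once; the work lies entirely in existence, i.e.\ checking that the assigned elements satisfy the defining relations.

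First I would dispatch the two planar halves. The subalgebra inclusions $\tilde H_k\ot 1\hookrightarrow E_H^{(k)}$ and $1\ot\tilde H_k\hookrightarrow E_H^{(k)}$ identify $X^{0,1}$ and $Y^{0,1}$ with the canonical element of $\tilde H_k\ot H$, so each satisfies relation \eqref{eq:reflec} inside $E_H^{(k)}\ot H$ by Proposition~\ref{prop:DKM1}. The assignment $\sigma_i\mapsto(i,i+1)\R^{i,i+1}$ satisfies the braid relations, by the quantum Yang--Baxter equation for $\R$, and commutes with $X^{0,1}$ for $i\geq 2$ by locality, since then the transposition $(i,i+1)$ fixes the legs $0,1$ and $\R^{i,i+1}$ acts on legs disjoint from those of $X^{0,1}$. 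By the standard reflection-equation construction of braid-group representations (see \cite{Donin2003a} and the discussion following \eqref{REqn}), these facts are precisely what guarantees that $X_1\mapsto X^{0,1}$, $\sigma_i\mapsto(i,i+1)\R^{i,i+1}$ defines a representation of $B_n(\mathbb{R}^2\backslash disc)$, so in particular the images of the $X_i$ pairwise commute. The identical argument with $Y^{0,1}$ handles the second copy, and since both use the same assignment on the $\sigma_i$ they combine into a single morphism out of the group generated by the two copies.

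It remains to verify that the mixed relation is respected, which is the crux. Working in $(E_H^{(k)}\ot H^{\ot n})^{\times}\rtimes S_n$ with $S_n$ permuting the legs $1,\dots,n$, the semidirect-product multiplication gives $\sigma_1^2=\R^{2,1}\R^{1,2}=D^{1,2}$ and
\[
Y_2=\sigma_1 Y^{0,1}\sigma_1=\R^{2,1}Y^{0,2}\R^{1,2}.
\]
Substituting these into $X_1Y_2=Y_2X_1\sigma_1^2$ and cancelling the common rightmost factor $\R^{1,2}$ reduces the cross relation to exactly \eqref{eq:ellCom}, which holds in $E_H^{(k)}\ot H$ by Theorem~\ref{EHrelns}. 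The main obstacle is precisely this bookkeeping: one must track how each transposition permutes the tensor legs of $\R$ and of $Y^{0,1}$ so that $Y_2$ and $\sigma_1^2$ emerge in the exact form that makes the match with \eqref{eq:ellCom} literal. This is where the genuinely elliptic content of the theorem sits, the two planar halves being governed entirely by the known reflection-equation story; with the match in hand, the combined morphism descends to $B_n(T^2\backslash disc)$, completing the proof.
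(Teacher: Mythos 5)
Your proposal is correct and follows essentially the same route as the paper's (much terser) proof: take the relations $X_{i+1}=\sigma_iX_i\sigma_i$, $Y_{i+1}=\sigma_iY_i\sigma_i$ as definitions, deduce pairwise commutativity from the reflection equation and Yang--Baxter, and observe that the mixed relation $X_1Y_2=Y_2X_1\sigma_1^2$ is precisely \eqref{eq:ellCom}. Your explicit computation of $Y_2=\R^{2,1}Y^{0,2}\R^{1,2}$ and $\sigma_1^2=\R^{2,1}\R^{1,2}$ in the semidirect product, and the cancellation of the rightmost $\R^{1,2}$, correctly fills in the bookkeeping the paper leaves implicit.
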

\begin{proof}
The first two set of cross relations can obviously be taken as a definition of $X_i,Y_i$ for $i>1$. That these operators pairwise commute follows from the reflection equation and the Yang-Baxter equation. The last cross relation is nothing but the defining equation~\eqref{eq:ellCom} of $E_H^{(k)}$.
\end{proof}

Let $\slz$ denote the group generated by $A,B,Z$ with relations:
\begin{align}
A^4=(AB)^3&=Z, & (A^2,B)&=1.
\end{align}
Clearly, $Z$ is central, so this is a central extension,
\[
1 \rightarrow \Z \rightarrow \slz \rightarrow SL_2(\Z)\rightarrow 1.
\]
\begin{prop}
	The group $\slz$ acts on $B_n(T^2\backslash disc)$ in the following way:
	\begin{align*}
		A\cdot \sigma_i &= \sigma_i & B \cdot \sigma_i &= \sigma_i\\
		A\cdot X_1&=Y_1 & A\cdot Y_1 &= Y_1X_1^{-1}Y_1^{-1}\\
		B \cdot X_1&=X_1 & B \cdot Y_1&=Y_1 X_1^{-1}.
	\end{align*}
\end{prop}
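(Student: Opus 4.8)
The plan is to produce the action geometrically and then read off the stated formulas. Write $\Sigma$ for the surface $T^2\setminus\text{disc}$, a genus-one surface with a single boundary circle, so that by definition $B_n(T^2\setminus\text{disc})=\pi_1(\conf_n(\Sigma))$. The mapping class group $\Gamma:=\operatorname{MCG}(\Sigma,\partial\Sigma)$ of homeomorphisms fixing $\partial\Sigma$ pointwise, modulo isotopy, acts on $\conf_n(\Sigma)$, and this action is functorial, hence induces a homomorphism $\Gamma\to\operatorname{Aut}(B_n(T^2\setminus\text{disc}))$. The classical description of $\Gamma$ matches the given presentation: taking $A$ to be the elliptic class realizing $\left(\begin{smallmatrix}0&-1\\1&0\end{smallmatrix}\right)$ on $H_1(\Sigma)=\Z^2$ and $B$ the Dehn twist realizing $\left(\begin{smallmatrix}1&-1\\0&1\end{smallmatrix}\right)$, one has $\Gamma\cong\slz$, with $A^4=(AB)^3$ equal to the boundary Dehn twist $Z$, which generates the center, and with $A^2$ commuting with $B$. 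Thus an $\slz$-action exists; crucially, because well-definedness and every defining relation of $\slz$ are built into $\Gamma$, nothing algebraic needs to be checked by hand — in particular the centrality of $Z\cdot(-)$ in the image is automatic from the centrality of $Z$ in $\Gamma$.

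It then remains only to verify that $A,B$ act by the stated formulas. I would fix a basepoint near $\partial\Sigma$, represent $X_1,Y_1$ by the two standard loops generating $\pi_1(\Sigma)=\langle X_1,Y_1\rangle$, and represent $\sigma_i$ by the elementary exchange of the $i$th and $(i+1)$th points inside a small disc. Since $A$ and $B$ may be realized by homeomorphisms supported away from that disc, they fix each $\sigma_i$, giving $A\cdot\sigma_i=B\cdot\sigma_i=\sigma_i$. For the remaining formulas one drags $X_1,Y_1$ through the homeomorphism: the Dehn twist $B$ fixes $X_1$ and sends $Y_1$ to $Y_1X_1^{-1}$, while the elliptic class $A$ exchanges the two curves up to orientation, sending $X_1\mapsto Y_1$ and $Y_1\mapsto Y_1X_1^{-1}Y_1^{-1}$. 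Reducing to $H_1(\Sigma)$ recovers the two matrices above, which fixes all orientation and basepoint conventions and serves as an immediate consistency check.

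Alternatively a purely algebraic proof is available, and may be preferable for self-containedness. One defines $A,B$ on $X_1,Y_1,\sigma_i$ by the formulas, extends to all $X_i,Y_i$ using $X_{i+1}=\sigma_iX_i\sigma_i$ and $Y_{i+1}=\sigma_iY_i\sigma_i$, and checks by induction that the assignment is $\sigma$-equivariant, i.e.\ $A\cdot X_i=Y_i$ and $A\cdot Y_i=Y_iX_i^{-1}Y_i^{-1}$ for all $i$ (and similarly for $B$). Well-definedness then reduces to verifying that the images satisfy the remaining relations of the presentation: the commutations $[X_i,X_j]=[Y_i,Y_j]=1$ and the mixed relation $X_1Y_2=Y_2X_1\sigma_1^2$. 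Invertibility follows from the explicit inverses $A^{-1}\colon Y_1\mapsto X_1,\ X_1\mapsto X_1^{-1}Y_1^{-1}X_1$ and $B^{-1}\colon X_1\mapsto X_1,\ Y_1\mapsto Y_1X_1$, which one checks directly. Finally one computes the endomorphisms $A^4$, $(AB)^3$ and $A^2B$, $BA^2$ on $X_1,Y_1$ and checks the two coincidences, setting $Z:=A^4$.

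The main obstacle in either approach is the same, and it lives entirely at the nonabelian, central-extension level: establishing $A^4=(AB)^3$ as automorphisms and confirming that this common automorphism $Z\cdot(-)$ is central. On $H_1(\Sigma)$ both sides are the identity of $SL_2(\Z)$, so nothing is visible there; the content is the precise conjugacy correction, namely that $Z$ acts by conjugation by the boundary word $[X_1,Y_1]$ (combined with the full-twist braid when $n>1$). Tracking this correction is exactly where the mixed relation $X_1Y_2=Y_2X_1\sigma_1^2$ becomes indispensable and where the framing conventions must be pinned down; in the topological approach this is precisely the point at which one verifies that the defining relation $(AB)^3=Z$ reproduces the boundary twist rather than the identity.
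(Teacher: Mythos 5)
Your proposal is correct; the paper itself offers no proof of this proposition, treating it as the standard action of the mapping class group $\operatorname{MCG}(T^2\setminus \mathrm{disc})\cong\slz$ on the surface braid group, which is exactly your first (topological) argument, and your conventions ($A\mapsto\left(\begin{smallmatrix}0&-1\\1&0\end{smallmatrix}\right)$, $B\mapsto\left(\begin{smallmatrix}1&-1\\0&1\end{smallmatrix}\right)$ on $H_1$) are consistent with the stated formulas. The algebraic verification you outline is a valid self-contained alternative, and you correctly locate the only real content in the central-extension relation: for $n=1$ one checks directly that $A^4$ and $(AB)^3$ both equal conjugation by the boundary class $c=Y_1X_1^{-1}Y_1^{-1}X_1$, which is fixed by both $A$ and $B$, giving centrality of $Z$ and the relation $(A^2,B)=1$, while for $n>1$ preservation of the mixed relation $X_1Y_2=Y_2X_1\sigma_1^2$ reduces, as you anticipate, to the commutation $[X_1,X_2]=1$.
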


\begin{prop}\label{prop:refsl2}
Let $B$ be an algebra and $(X_B,Y_B) \in B \ot H$ satisfying equation~\eqref{eq:ellCom} and axioms~\eqref{eq:reflec} with $k=1$. Then, so does $(X_B,Y_BX_B^{-1})$ and $(Y_B,Y_BX_B^{-1}Y_B^{-1})$.
\end{prop}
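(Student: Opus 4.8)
The plan is to verify directly that the transformed pairs satisfy the two required conditions, namely the individual reflection equation~\eqref{eq:reflec} with $k=1$ and the joint commutation relation~\eqref{eq:ellCom}. For the first transformed pair $(X_B, Y_BX_B^{-1})$, the $X_B$ component is unchanged, so~\eqref{eq:reflec} holds for it by hypothesis; what must be checked is that $Y_BX_B^{-1}$ again satisfies~\eqref{eq:reflec}, and that the pair $(X_B, Y_BX_B^{-1})$ satisfies~\eqref{eq:ellCom}. Observe that the $\slz$-action on $B_n(T^2\backslash disc)$ in the preceding Proposition sends $Y_1 \mapsto Y_1X_1^{-1}$ under $B$, and $X_1 \mapsto Y_1$, $Y_1 \mapsto Y_1X_1^{-1}Y_1^{-1}$ under $A$. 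So the statement is precisely the algebraic shadow of these two generators acting on the universal elements: I would exploit the fact that $X^{0,1}$ and $Y^{0,1}$ already satisfy these relations inside $E_H^{(k)}$ (by Proposition~\ref{prop:DKM1} and Theorem~\ref{EHrelns}), so the identities to be proved are relations that hold universally, and it suffices to prove them once in $E_H^{(1)}$ itself — or, more cleanly, to carry out the manipulation abstractly using only~\eqref{eq:reflec} and~\eqref{eq:ellCom} as hypotheses on $X_B,Y_B$.

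Concretely, first I would establish that $(X_B, Y_BX_B^{-1})$ satisfies~\eqref{eq:ellCom}. This amounts to substituting $Y_B \mapsto Y_BX_B^{-1}$ into the left- and right-hand sides of
\[
X^{0,1}\R^{2,1}Y^{0,2}=\R^{2,1}Y^{0,2}\R^{1,2}X^{0,1}\R^{2,1}
\]
and reducing the result using the given~\eqref{eq:ellCom} for $(X_B,Y_B)$ together with the reflection equation for $X_B$ on legs $1$ and $2$. The key move is to commute the factor $(X_B^{2})^{-1}$ (appearing as $(X_B^{0,2})^{-1}$ after the substitution) past the $\R$-matrices and the $X_B^{0,1}$ using the reflection equation; inverting the reflection equation is legitimate since all $X_B, Y_B$ are assumed invertible (they land in $(E_H^{(k)}\ot H^{\ot n})^\times$). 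I would verify~\eqref{eq:reflec} for $Y_BX_B^{-1}$ by a similar computation, again using that both $X_B$ and $Y_B$ separately satisfy~\eqref{eq:reflec} and that they satisfy~\eqref{eq:ellCom} jointly; the coproduct relation~\eqref{eq:reflec} for a product like $Y_BX_B^{-1}$ follows from the multiplicativity encoded in the reflection/elliptic relations. Having handled the $B$-generator, the $A$-generator case $(Y_B, Y_BX_B^{-1}Y_B^{-1})$ is then obtained by the same substitution technique, or more economically by noting that $A$ can be expressed through compositions involving the transformation already verified.

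The main obstacle I anticipate is bookkeeping: the substitutions produce expressions with several interleaved $\R^{1,2}$, $\R^{2,1}$ factors and inverse powers of $X_B$ sitting on different tensor legs, and the reduction requires repeatedly invoking the reflection equation in the correct leg-configuration and in inverted form. Keeping track of which $\R$-matrix commutes past which occurrence of $X_B^{\pm 1}$ — and ensuring the cancellations genuinely reproduce the target relation rather than a superficially similar one — is where errors are most likely to creep in. A useful sanity check throughout is that these algebraic identities must match the group-theoretic relations of the $\slz$-action stated just above, which governs exactly which monomials in $X_B,Y_B$ should appear after simplification; that correspondence provides an independent confirmation that each reduction step is heading toward the right normal form.
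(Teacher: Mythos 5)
Your overall strategy is the right one, but as written the proposal defers exactly the step that constitutes the proof. The paper's argument has two halves. For the joint relation~\eqref{eq:ellCom} it does no computation at all: since \eqref{eq:ellCom} is the image of the braid-group relation $X_1Y_2=Y_2X_1\sigma_1^2$, and the assignments $(X_1,Y_1)\mapsto(X_1,Y_1X_1^{-1})$ and $(X_1,Y_1)\mapsto(Y_1,Y_1X_1^{-1}Y_1^{-1})$ are automorphisms of $B_n(T^2\backslash disc)$ by the preceding proposition, the transformed pairs satisfy \eqref{eq:ellCom} automatically. You mention this correspondence only as a ``sanity check'' and instead propose to re-derive \eqref{eq:ellCom} by substitution and $\R$-matrix reduction; that route would work but is the harder one, and you do not carry it out. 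For the reflection axiom~\eqref{eq:reflec} with $k=1$, i.e.\ $Z^{0,12}=\R^{2,1}Z^{0,2}\R^{1,2}Z^{0,1}$, the entire content of the paper's proof is the chain
\begin{align*}
(Y_BX_B^{-1})^{0,12}&= \R^{2,1}Y_B^{0,2}\R^{1,2}Y_B^{0,1}\,(X_B^{0,1})^{-1}(\R^{1,2})^{-1}(X_B^{0,2})^{-1}(\R^{2,1})^{-1}\\
&=\cdots=\R^{2,1}\,Y_B^{0,2}(X_B^{0,2})^{-1}\,\R^{1,2}\,Y_B^{0,1}(X_B^{0,1})^{-1},
\end{align*}
where the omitted steps use the (inverted) reflection equation for $X_B$ once and \eqref{eq:ellCom} once, in a specific leg configuration. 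Your proposal asserts this ``follows from the multiplicativity encoded in the reflection/elliptic relations'' and flags the bookkeeping as the danger point, but never performs it; since this manipulation is essentially the whole proof, the attempt is incomplete rather than wrong in conception.

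A further caution: obtaining the second pair ``more economically by noting that $A$ can be expressed through compositions involving the transformation already verified'' does not work as stated, since $A$ is not a word in $B$ alone (powers of $B$ are unipotent in $SL_2(\Z)$, while $A$ is not). What does work --- and is what the paper means by ``the exact same computation replacing $Y_B$ by $Y_BX_B^{-1}$ and $X_B$ by $Y_B$'' --- is to note that $Y_BX_B^{-1}Y_B^{-1}=(Y_BX_B^{-1})\cdot Y_B^{-1}$, so the second pair has the form $(X',Y'X'^{-1})$ with $X'=Y_B$ and $Y'=Y_BX_B^{-1}$; one must then check that $(Y_B,Y_BX_B^{-1})$ itself satisfies the hypotheses (again via the braid-group automorphism together with the first half) before re-running the displayed computation with the substituted letters. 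Either making that reduction precise or redoing the computation directly is needed to close the argument.
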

\begin{proof}
	Equation~\eqref{eq:ellCom} is exactly one of the defining relation of $B_{1,n}^1$ so that it is satisfied follows from the previous proposition. So we just have to check that $Y_BX_B^{-1}$ and $Y_BX_B^{-1}Y_B^{-1}$ satisfies~\eqref{eq:reflec} with $k=1$. This is a direct computation:
	\begin{align*}
		(Y_BX_B^{-1})^{0,12}&= \R^{2,1}Y_B^{0,2}\R^{1,2}Y_B^{0,1} (X_B^{0,1})^{-1} (\R^{1,2})^{-1} (X_B^{0,2})^{-1} (\R^{2,1})^{-1}\\
&= \R^{2,1}Y_B^{0,2}\R^{1,2}Y_B^{0,1} (\R^{1,2})^{-1}(X_B^{0,2})^{-1} (\R^{2,1})^{-1}(X_B^{0,1})^{-1}\R^{2,1}(\R^{2,1})^{-1}\\
&= \R^{2,1}Y_B^{0,2}\R^{1,2}(\R^{1,2})^{-1}(X_B^{0,2})^{-1}\R^{1,2}Y_B^{0,1}\R^{2,1}(\R^{2,1})^{-1}(X_B^{0,1})^{-1}\\
&= \R^{2,1}Y_B^{0,2}(X_B^{0,2})^{-1}\R^{1,2}Y_B^{0,1}(X_B^{0,1})^{-1},
	\end{align*}
where at lines 2 and 3 we use the reflection equation and the elliptic commutation relation respectively. The second part is proved by doing the exact same computation replacing $Y_B$ by $Y_BX_B^{-1}$ and $X_B$ by $Y_B$.
\end{proof}
\begin{cor}\label{cor:action}
There is an action of $\slz$ on $E_H^{(1)}$, uniquely determined by its action on canonical elements $X,Y$ as follows:
\begin{align*}
		A\cdot X&=Y, & A\cdot Y &= YX^{-1}Y^{-1},\\
		B \cdot X&=X, & B \cdot Y&=Y X^{-1}.
	\end{align*}
Moreover, the action is compatible with the $\slz$-action on $B_n(T^2\backslash disc)$, 
\end{cor}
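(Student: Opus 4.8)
The plan is to construct the $\slz$-action on $E_H^{(1)}$ by invoking the universal property (Corollary~\ref{cor:universal}) applied to the algebra $B=E_H^{(1)}$ itself. Recall that a group action by algebra automorphisms is the same data as a group homomorphism from $\slz$ into $\operatorname{Aut}(E_H^{(1)})$. So for each generator $A, B, Z$ of $\slz$ I would first produce a candidate automorphism, then check that these candidates satisfy the defining relations $A^4=(AB)^3=Z$ and $(A^2,B)=1$, so that they assemble into a genuine group homomorphism.

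First I would build the automorphisms attached to $A$ and $B$. For $A$, set $X_B := Y$ and $Y_B := YX^{-1}Y^{-1}$, where $X,Y$ are the canonical elements of $E_H^{(1)}\ot H$. Proposition~\ref{prop:refsl2} is exactly the statement that this pair $(X_B,Y_B)$ satisfies the reflection axiom~\eqref{eq:reflec} with $k=1$ and the elliptic commutation relation~\eqref{eq:ellCom}. Hence Corollary~\ref{cor:universal} yields a \emph{unique} algebra endomorphism $\alpha_A: E_H^{(1)}\to E_H^{(1)}$ with $\alpha_A(x\ot 1)=(\id\ot x)(Y)$ and $\alpha_A(1\ot x)=(\id\ot x)(YX^{-1}Y^{-1})$. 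Identically, for $B$ set $X_B:=X$ and $Y_B:=YX^{-1}$; the second half of Proposition~\ref{prop:refsl2} supplies the required identities, and Corollary~\ref{cor:universal} gives the endomorphism $\alpha_B$. I would note that these are \emph{a priori} only endomorphisms; they become automorphisms once the relations are verified, since the relations force compositions to the identity, which is itself characterized by the universal property applied to the pair $(X,Y)$.

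The main obstacle, and the heart of the proof, is verifying that $\alpha_A$ and $\alpha_B$ satisfy the $\slz$-relations. By the uniqueness clause of Corollary~\ref{cor:universal}, it suffices to check the relations on the canonical elements $X,Y$, since any endomorphism is determined by its effect on them. Concretely, I would compute the images of $X$ and $Y$ under $\alpha_A^4$, under $(\alpha_A\alpha_B)^3$, and under $\alpha_A^2\alpha_B$ versus $\alpha_B\alpha_A^2$, using the substitution rules above iteratively. One expects $\alpha_A^4$ and $(\alpha_A\alpha_B)^3$ to act on $X,Y$ as conjugation by a common central-type element $Z$ (this defines the automorphism attached to $Z$), and $\alpha_A^2$ to commute with $\alpha_B$ on the nose. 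These computations mirror the classical $SL_2(\Z)$ relations on the two standard loops of the torus — $A$ being (a lift of) the order-$4$ rotation $S$ and $B$ a transvection — so the formulas for the induced substitutions on $X,Y$ will track the images of the homology classes under the corresponding matrices. The delicate point is bookkeeping the noncommutativity: the substitution $Y\mapsto YX^{-1}Y^{-1}$ is conjugation-twisted, so iterating it requires careful use of~\eqref{eq:ellCom} to simplify, exactly as in Proposition~\ref{prop:refsl2}.

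Finally, the compatibility with the $\slz$-action on $B_n(T^2\backslash disc)$ is essentially automatic from the way everything is set up. The representation $\phi$ of Theorem~\ref{braid-actions} sends $X_1\mapsto X^{0,1}$, $Y_1\mapsto Y^{0,1}$, and the $\slz$-action on the braid group (given on $X_1,Y_1$ by the same formulas $A\cdot X_1=Y_1$, etc.) matches the substitution rules defining $\alpha_A,\alpha_B$ termwise. Thus I would conclude by observing that the square relating the two $\slz$-actions through $\phi$ commutes on generators $X_1,Y_1$ of the braid group, and hence commutes, completing the proof.
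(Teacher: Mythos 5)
Your proposal takes essentially the same route as the paper, whose entire proof is the citation of Proposition~\ref{prop:refsl2} together with the universal property of Corollary~\ref{cor:universal}; your construction of $\alpha_A,\alpha_B$ and the use of uniqueness to reduce the group relations to a check on $X,Y$ is exactly that argument, filled in. One small caveat: since $A^4=Z$ generates an infinite cyclic centre rather than being trivial, no relation forces any composite of $\alpha_A,\alpha_B$ to be the identity, so invertibility is better established by exhibiting the inverse substitutions directly (e.g.\ $A^{-1}\cdot X=X^{-1}Y^{-1}X$, $A^{-1}\cdot Y=X$) and applying the same universal-property argument to them.
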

\begin{proof}
This follows from Proposition~\ref{prop:refsl2} together with the universal property stated in Corollary~\ref{cor:universal}.
\end{proof}
%}}}
\section{Relation with the Heisenberg double and quantum Fourier transform}%{{{

Since $\tilde H_0$ is a $H^{[2],coop}$-module algebra, one can form the semi-direct product $\tilde H \rtimes H^{[2],coop}$. It is easily checked that $H \ot 1 \subset H^{[2],coop}$ is a coideal subalgebra, hence the following definition makes sense:
\begin{defi}
	The Heisenberg double $D_H$ is the subalgebra $\tilde H_0\rtimes (H\ot 1)$.
\end{defi}
\begin{rmk}
The standard definition of the Heisenberg double involves $H^e$ and the usual dual, instead of $H^{[2]}$ and the braided dual. However, it is shown in \cite{Varagnolo2010} that these two algebras are isomorphic.
\end{rmk}

Clearly, the double braiding $\R^{2,1}\R^{1,2}$ satisfies axiom~\eqref{eq:reflec} with $k=0$. This is a manifestation of the embedding of the cylinder braid group on $n$ strands into the ordinary braid group on $n+1$ strands.  Let $\phi_H$ be the factorization map
\begin{align*}\phi_H: \tilde{H}_0 &\to H,\\
f &\mapsto (f \ot id)(\R^{2,1}\R^{1,2}).\end{align*}.
We have: 
\begin{thm}\cite{Jordan2009}
	The canonical element $X \in D_H \ot H$ together with the image of the double braiding under the inclusion $H\ot H \rightarrow D_H \ot H$ satisfy the commutation relation~\eqref{eq:ellCom}.
\end{thm}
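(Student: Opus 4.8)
The plan is to verify \eqref{eq:ellCom} by a direct computation modeled on the proof of Theorem~\ref{EHrelns}, with the braided-tensor-square structure replaced by the smash-product structure of $D_H=\tilde H_0\rtimes(H\ot 1)$. Write $X=\sum_i e^i\ot e_i$ with $e^i\in\tilde H_0\subset D_H$, and write the double braiding as $D=\R^{2,1}\R^{1,2}=\sum D_{(1)}\ot D_{(2)}$, so that under the inclusion $H\ot 1\hookrightarrow D_H$ we have $Y=\sum (D_{(1)}\ot 1)\ot D_{(2)}$. Since $D_H$ is a smash product it has a normal form in which the factors $\tilde H_0$ and $H\ot 1$ occur in a fixed order; accordingly one of the two products $X^{0,1}Y^{0,2}$ and $Y^{0,2}X^{0,1}$ in $D_H\ot H\ot H$ is already normally ordered, while the other must be straightened. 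As in the passage to \eqref{eq:matEll}, the content of \eqref{eq:ellCom} is exactly this straightening, so it suffices to compute $(a\ot 1)\cdot f$ for $a\in H$ and $f\in\tilde H_0$ and to recognize the result as \eqref{eq:ellCom}.

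The straightening is governed by the smash-product rule $(a\ot 1)\cdot f=\sum\big((a\ot 1)_{(1)}\lef f\big)(a\ot 1)_{(2)}$, where the coproduct is that of $H^{[2],coop}$ restricted to the coideal subalgebra $H\ot 1$. Computing this coproduct from $\tilde\Delta(x\ot y)=(\R^{2,3})^{-1}(\tau^{2,3}\circ\Delta(x\ot y))\R^{2,3}$ produces a conjugating pair $(\R^{2,3})^{\pm1}$; after applying the module action $\lef$, pairing against the canonical element, and contracting the $H$-legs as in the step from \eqref{eq:temp} onward, these conjugating matrices become precisely the outer factors $\R^{2,1}$ and $\R^{1,2}$ of \eqref{eq:ellCom}, while the two $R$-matrices comprising the double braiding $D$ supply the remaining ones. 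I would then collapse the expression using the Yang--Baxter equation together with the identities \eqref{eq:R1} and \eqref{eq:R2}, exactly as in the reduction carried out in the proof of Theorem~\ref{EHrelns}.

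The principal obstacle is bookkeeping: tracking the several copies of $\R$ that enter --- the two from $D$, the conjugating pair from the twisted coproduct of $H^{[2],coop}$, and those implicit in the braided-dual multiplication on $\tilde H_0$ --- and confirming that they assemble into the precise pattern $\R^{2,1}Y^{0,2}\R^{1,2}X^{0,1}\R^{2,1}$ rather than a conjugate of it. A cheap consistency check that pins down the placement of the conjugating matrices, and which I would carry out first, is to let the $\tilde H_0$-factor act trivially: the mixed relation should then degenerate to the reflection equation \eqref{eq:reflec} satisfied by the double braiding alone, which is the observation recorded immediately before the statement. Once the matrix identity \eqref{eq:ellCom} is established, the universal homomorphism $\Phi\colon E_H^{(0)}\to D_H$ of the introduction follows formally from Corollary~\ref{cor:universal}.
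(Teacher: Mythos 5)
The paper itself offers no proof of this statement; it is quoted from \cite{Jordan2009}. Your overall strategy --- reading \eqref{eq:ellCom} as the normal-ordering rule of the smash product $\tilde H_0\rtimes(H\ot 1)$, in exact parallel with the way \eqref{eq:matEll} packages the cross relations of $E_H^{(k)}$ --- is the right one, and correctly identifies $X^{0,1}\R^{2,1}Y^{0,2}$ as the normally ordered side and $\R^{2,1}Y^{0,2}\R^{1,2}X^{0,1}\R^{2,1}$ as the side requiring straightening of $(a\ot 1)\cdot f$. But as written this is a plan, not a proof. The entire content of the theorem is that the several copies of $\R$ in play --- the two inside $D=\R^{2,1}\R^{1,2}$, the conjugating pair arising from the coproduct of $H^{[2],coop}$ restricted to the coideal $H\ot 1$, and those implicit in the $H^{[2],coop}$-action on $\tilde H_0$ --- contract, via \eqref{eq:R1}, \eqref{eq:R2} and the Yang--Baxter equation, to precisely the pattern $\R^{2,1}Y^{0,2}\R^{1,2}X^{0,1}\R^{2,1}$ rather than a conjugate or reordered variant of it. You explicitly defer this as ``bookkeeping''; that bookkeeping \emph{is} the theorem, and nothing in the write-up certifies that it closes.

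The one concrete verification you do propose is not valid. Setting $X^{0,1}=1$ in \eqref{eq:ellCom} yields $\R^{2,1}Y^{0,2}=\R^{2,1}Y^{0,2}\R^{1,2}\R^{2,1}$, i.e.\ $\R^{1,2}\R^{2,1}=1$, which is false: the mixed relation does not degenerate to anything satisfied by the double braiding alone. The observation recorded before the theorem --- that $\R^{2,1}\R^{1,2}$ satisfies \eqref{eq:reflec} with $k=0$ --- concerns the coproduct axiom, which is a different equation from \eqref{eq:ellCom}; and the projection $D_H\to H$ annihilating the $\tilde H_0$-factor is not an algebra homomorphism (the counit of $\tilde H_0$ is not invariant for the $H^{[2],coop}$-action), so no such degeneration is available. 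This check would therefore spuriously ``fail'' on a true statement and cannot be used to pin down the placement of the conjugating matrices; they have to be determined by actually carrying out the computation you have outlined.
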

\begin{cor}
There exists a canonical algebra map from the elliptic double to the Heisenberg double, given by the identity on the first $\tilde H_0$ component and defined on the second component by the factorization map $\phi_H$.
\end{cor}
\begin{proof}
	It follows from the universal property of Corollary~\ref{cor:universal}.
\end{proof}
\begin{defi}
A quasi-triangular Hopf algebra is called factorizable if $\phi_H$ is injective.
\end{defi}

Let $I_H$ be the image of $\phi_H$ and let $D_H'$ be the subalgebra $\tilde H \rtimes (I_H \ot 1)$ of $D_H$. 
\begin{thm}
If $H$ is a factorizable Hopf algebra, then $D_H'$ is isomorphic as an algebra to $E_H^{(0)}$.
\end{thm}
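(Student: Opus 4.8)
The plan is to exhibit the desired isomorphism as the canonical map $\Phi\colon E_H^{(0)}\to D_H$ of the preceding corollary, and to show that, when $H$ is factorizable, $\Phi$ restricts to a bijection onto the subalgebra $D_H'$. Recall that $\Phi$ is the unique algebra morphism produced by Corollary~\ref{cor:universal} from the pair consisting of the canonical element $X\in D_H\ot H$ and the image $Y_{D_H}$ of the double braiding $\R^{2,1}\R^{1,2}$ under $H\ot H\to D_H\ot H$. By the explicit formula in that corollary, $\Phi(f\ot 1)=(\id\ot f)(X)=f$ (using the reconstruction $f=\sum e^i f(e_i)$) and $\Phi(1\ot g)=(\id\ot g)(Y_{D_H})=\phi(g)$; thus $\Phi$ is the identity on the first factor $\tilde H_0\ot 1$ and is the factorization map $\phi$ on the second factor $1\ot\tilde H_0$.

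First I would identify the underlying linear map of $\Phi$. Using the braided tensor square normal ordering, $E_H^{(0)}\cong\tilde H_0\ot\tilde H_0$ is spanned by the monomials $(f\ot 1)(1\ot g)$, which we identify with $f\ot g$; likewise, in the semi-direct product $D_H=\tilde H_0\rtimes(H\ot 1)$ the product $f\cdot(h\ot 1)$ is identified with $f\ot h$. Since $\Phi$ is an algebra morphism,
\[
\Phi(f\ot g)=\Phi(f\ot 1)\,\Phi(1\ot g)=f\cdot(\phi(g)\ot 1)=f\ot\phi(g),
\]
so that as a map of vector spaces $\Phi=\id_{\tilde H_0}\ot\phi$. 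The point demanding care is the compatibility of the two normal orderings: one must check that the ``first-then-second'' presentation of the braided tensor square matches the ``$\tilde H_0$-then-$(H\ot 1)$'' presentation of the semi-direct product, so that $\Phi$ genuinely has the factorized form $\id\ot\phi$ with no cross terms.

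Granting this, the conclusion follows from elementary linear algebra. The image of $\id_{\tilde H_0}\ot\phi$ is $\tilde H_0\ot I_H$; since the image of an algebra morphism is a subalgebra, this simultaneously confirms that $\tilde H_0\ot I_H$ underlies the subalgebra $D_H'=\tilde H\rtimes(I_H\ot 1)$ and that $\Phi$ maps onto it surjectively. The kernel of $\Phi$ is $\tilde H_0\ot\ker\phi$, which vanishes precisely when $\phi$ is injective, that is, precisely when $H$ is factorizable. Hence for factorizable $H$ the map $\Phi\colon E_H^{(0)}\to D_H'$ is a bijective algebra morphism; as the inverse of a bijective algebra morphism is automatically an algebra morphism, $\Phi$ is an isomorphism of algebras.

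I expect the main obstacle to be the bookkeeping of the second paragraph---verifying that the canonical vector-space identifications of the braided tensor square and of the semi-direct product are genuinely compatible, so that $\Phi$ factors exactly as $\id\ot\phi$ rather than merely up to cross terms. Once the linear map is pinned down in this factorized form, factorizability carries the rest of the argument through the injectivity of $\phi$ alone.
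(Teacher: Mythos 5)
Your proof is correct and follows exactly the route the paper intends: the paper gives no written proof, but the surrounding text already records that the canonical map $\Phi$ of Corollary~\ref{cor:universal} is the identity on the first $\tilde H_0$ factor and the factorization map $\phi$ on the second, so that $\Phi=\id\ot\phi$ linearly, with image $\tilde H_0\ot I_H=D_H'$ and kernel $\tilde H_0\ot\ker\phi$, which vanishes precisely when $H$ is factorizable. The normal-ordering compatibility you flag as the main worry is in fact settled by the very computation you display, since both $E_H^{(0)}$ and $D_H$ are spanned by first-factor-times-second-factor monomials and $\Phi$ is multiplicative.
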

\begin{proof}
	The algebra map $E_H^{(0)}\rightarrow D_H$ is given by $\id \ot \phi_H$. Since $H$ is factorizable this map is injective, and its image is $D_H'$ by definition.
\end{proof}
Let $G$ be a reductive algebraic group, $\mf g$ its Lie algebra and $U=U_q(\mf g)$ the corresponding quantum group. Recall (see e.g.~\cite[Chap.~9]{Chari1994}) that this is a quasi-triangular Hopf algebra\footnote{This is not quite true since the R-matrix does not belong to $U_q(\mf g)^{\ot 2}$ but only to a certain completion of it, but it is still enough for our purpose} over $\C(q)$ for $q$ a variable which deform the enveloping algebra of $\mf g$. Denote by $U'=U_q(\mf g)'$ its ad-locally finite part.
\begin{thm}[\cite{Baumann1998,Reshetikhin1988}]
$U$ is a factorizable ribbon Hopf algebra, and the image of the factorization map $(U^*) \rightarrow U$ is $U'$.
\end{thm}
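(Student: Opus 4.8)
The plan is to treat the three claims---ribbon, factorizable, and the identification of the image---separately, since each draws on different structural input about $U=U_q(\mf g)$, while building on the reflection-equation framework already established. I take the quasi-triangular structure, with $\R$ lying in a completion of $U^{\ot 2}$, as given from \cite{Chari1994}.

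\textbf{Ribbon.} By the criterion recorded in the text (where a ribbon structure is identified with the existence of a \emph{central} square root of $\nu=uS(u)$, with $u=m((S\ot\id)(\R^{2,1}))$), it suffices to exhibit such a square root explicitly. The group-like element $K_{2\rho}$ of $U_q(\mf g)$ implements the square of the antipode, $S^2(x)=K_{2\rho}\,x\,K_{2\rho}^{-1}$. Since $u$ also implements $S^2$ by conjugation, the ratio $\theta:=uK_{2\rho}^{-1}$ is central; a short computation with the explicit formula for $\R$ gives $S(u)=K_{2\rho}^{-1}uK_{2\rho}^{-1}$, whence $\theta^2=uS(u)=\nu$. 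Thus $\theta$ is the sought central square root and $U$ is ribbon.

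\textbf{Factorizable.} By definition $U$ is factorizable when the factorization map $\phi\colon f\mapsto(f\ot\id)(D)$, with $D=\R^{2,1}\R^{1,2}$, is injective; equivalently, when $D\in U^{\ot 2}$ is non-degenerate as a linear map $U^\circ\to U$. I would deduce this from the triangular decomposition of $\R$: expanding $\R$ in a PBW basis, $D$ factors through the Rosso (quantum Killing) pairing between $U_q(\mf n^+)$ and $U_q(\mf n^-)$ together with an invertible Cartan part, and the non-degeneracy of this pairing, established in \cite{Reshetikhin1988}, forces $\phi$ to be injective.

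\textbf{Image equals $U'$.} The essential observation is that $\phi$ is equivariant for the adjoint action. Indeed the text already records $D\,\Delta(x)=\Delta(x)\,D$ for all $x$, so $D$ is $\ad$-invariant in $U^{\ot 2}$, and hence $\phi$ intertwines the coadjoint action on $U^\circ\cong\cO_q(G)$ with the adjoint action on $U$. By the quantum Peter--Weyl theorem $\cO_q(G)\cong\bigoplus_{\lambda}V_\lambda^*\ot V_\lambda$ decomposes into finite-dimensional $\ad$-submodules, so $\im\phi$ is a sum of finite-dimensional $\ad$-submodules and therefore lands in the locally finite part $U'$. For the reverse inclusion I would invoke the theorem of Joseph and Letzter, $U'=\bigoplus_{\lambda}(\ad U)(K_{-2\lambda})$ with $(\ad U)(K_{-2\lambda})\cong V_\lambda\ot V_\lambda^*$, and analyze the image of each block: the matrix coefficients of $(\id\ot\pi_\lambda)(D)$ are the quantum $L$-operator entries, whose leading term under the PBW filtration is $K_{2\lambda}$ up to a scalar. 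Since $(\ad U)(K_{-2\lambda})$ is generated by this leading term, an induction on $\lambda$ in the dominance order yields $\im\phi\supseteq U'$, hence equality; factorizability then matches the two decompositions blockwise.

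\textbf{Main obstacle.} The hard part is the last inclusion $\im\phi\supseteq U'$---that the $L$-operator coefficients generate \emph{exactly} the locally finite part---which is the substance of \cite{Baumann1998} and requires the detailed structure theory of $U'$ together with the leading-term analysis of $D$. A secondary technical nuisance is that $\R$ lives only in a completion of $U^{\ot 2}$, so every manipulation with $D$ and $\phi$ must be interpreted through pairings against finite-dimensional representations, where the relevant coefficients genuinely lie in $U$.
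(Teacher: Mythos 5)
This theorem is imported by the paper from \cite{Baumann1998,Reshetikhin1988} without proof, so there is no internal argument to compare your proposal against; I can only assess it as a reconstruction of the cited literature. On that basis your outline is essentially correct and follows the standard route. The ribbon part is right: $u$ and $K_{2\rho}$ both implement $S^{2}$ by conjugation, so $\theta=uK_{2\rho}^{-1}$ is central, and the identity $S(u)=K_{2\rho}^{-1}uK_{2\rho}^{-1}$ (equivalently $S(u)=K_{2\rho}^{-2}u$, using $S^{2}(u)=u$) gives $\theta^{2}=uS(u)=\nu$, which is exactly the criterion the paper uses for the ribbon property. Factorizability via the triangular factorization of $\R$ and non-degeneracy of the Rosso pairing is the argument of \cite{Reshetikhin1988}. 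For the image, the $\ad$-equivariance of $f\mapsto(f\ot\id)(\R^{2,1}\R^{1,2})$ plus Peter--Weyl correctly gives $\im\phi\subseteq U'$, and the identification of $\phi(V_\lambda^{*}\ot V_\lambda)$ with the Joseph--Letzter block $(\ad U)(K_{-2\lambda})$ is indeed the substance of \cite{Baumann1998}; you correctly flag this as the genuinely hard step rather than claiming it is routine. One small streamlining: rather than a leading-term induction in the dominance order, it suffices to note that $\phi(V_\lambda^{*}\ot V_\lambda)$ is an $\ad$-submodule containing $K_{-2\lambda}$ up to scalar (the image of the extremal matrix coefficient), hence contains $(\ad U)(K_{-2\lambda})$, and injectivity of $\phi$ forces equality by comparing the two multiplicity-finite decompositions. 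Your closing caveat about $\R$ living only in a completion, so that all identities must be read against finite-dimensional representations, is apt and matches the paper's own footnoted disclaimer.
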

Let $D_q(G)$ be the subalgebra $\tilde U\rtimes U'$ of the Heisenberg double of $U$. It is a deformation of the algebra of differential operators on $G$. Thanks to the above theorem, $D_q(G)$ is isomorphic to $E_{U}^{(0)}$ which is itself isomorphic to $E_U^{(1)}$. Altogether this implies the following: 
\begin{cor}
	The isomorphism $D_q(G)\cong E_{U}^{(1)}$ together with the formulas of Corollary~\ref{cor:action} yield an action of $\slz$ on $D_q(G)$ by algebra automorphism.
\end{cor}

%}}}
\section{Relation to classical Fourier transform}%{{{
In this section we show how the Weyl algebra of $\mf g$ and the classical Fourier transform can be obtained both directly as the elliptic double of a certain Hopf algebra and via an appropriate degeneration of the elliptic double of the corresponding quantum group. Let $U_{\h}(\mf g)$ be the ``formal'' version of the quantum group. This a topological quasi-triangular Hopf algebra over $\C[[\h]]$, where $\h$ is a formal variable, deforming the enveloping algebra of $\mf g$ and whose definition can be found, e.g., in~\cite[Chap.~6]{Chari1994}. Since directly taking the classical (i.e. $\hbar=0$) limit of the elliptic commutation relation gives the commutative algebra $S(\mf g)^{\ot 2}$ we will have to consider a slightly more complicated degeneration.

Let $S(\mf g)$ denote the symmetric algebra on $\mf g$, equipped with its standard co-product $\Delta(X) = X\ot 1 + 1 \ot X$ for $X\in \mf g$, making it a commutative, cocommutative Hopf algebra. Let $r\in \mf g^{\ot 2}$ denote the quasi-classical limit of the R-matrix of $U_{\h}(\mf g)$, i.e.:
\[
\R=1+\h r+O(\h^2).
\]
Then, in a straightforward way, the completion of the symmetric algebra $(\widehat{S}(\mf g),\R_0=\exp(r))$ is a quasi-triangular, factorizable Hopf algebra\footnote{Here the tensor product is the topological one, i.e. $\widehat{S}(\mf g)^{\ot 2}:=\widehat{S}(\mf g\times\mf g)$}. Let $t=r+r^{2,1} \in S^2(\mf g)^{\mf g}$ and let $C$ denote the corresponding Casimir element, i.e. $C=m(t)$ where $m$ is the multiplication of $S(\mf g)$. Then $\nu_0=\exp(-C/2)$ is a ribbon element. Since $\R_0 \not\in S(\mf g)^{\ot 2}$, $S(\mf g)$ is not strictly speaking a ribbon Hopf algebra, but the construction of the elliptic double is still well defined in this situation. 

Let $D(\mf g)$ be the algebra of differential operators on $\mf g$, i.e. the Weyl algebra. As a vector space it is $S(\mf g^*)^{\ot 2}$, the two copies of $S(\mf g^*)$ are subalgebras and the cross relations are:
\begin{equation}
\forall f,g \in \mf g^*, [f \ot 1,1\ot g]=\langle f,g\rangle \label{WeylDefRel}
\end{equation}
where $\langle\ ,\ \rangle$ is the pairing on $\mf g^*$ induced by $t$. The first result of this section is:
\begin{prop}
The 0th elliptic double of $(S(\mf g),\R_0)$ is isomorphic to the Weyl algebra $D(\mf g)$ and the action of the generator $A$ of $\slz$ coincides with the classical Fourier transform. That is, on generators $(f,g)\in \mf g^*\times \mf g^*\subset D(\mf g)$, we have, 
\[
A(f,g) = (-g,f).
\]
The operator $B$ acts by
\[
	B(f,g)=(f-g,g).
\]
\end{prop}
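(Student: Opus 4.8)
The plan is to compute the elliptic double $E^{(0)}_{(S(\mf g),\R_0)}$ explicitly by specializing the defining relation~\eqref{eq:ellCom} to the quasi-classical data, and then match the result with the Weyl algebra presentation in~\eqref{WeylDefRel}. First I would identify the braided dual $\tilde H$ in this setting: since $S(\mf g)$ is cocommutative and $\R_0=\exp(r)$, the reflection-equation twisting is controlled by the double braiding $D=\R^{2,1}_0\R^{1,2}_0=\exp(t)$, where $t=r+r^{2,1}$ is symmetric. Because $S(\mf g)$ is commutative and cocommutative, I expect $\tilde H\cong S(\mf g^*)$ as an algebra (the $R$-matrix twisting of the multiplication on $H^\circ=S(\mf g^*)$ becomes trivial in the associated graded, and here the relevant correction is already controlled), so that $E^{(0)}$ is $S(\mf g^*)^{\ot 2}$ as a vector space, with two commutative subalgebras $S(\mf g^*)\ot 1$ and $1\ot S(\mf g^*)$.

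The core computation is to extract the cross relation between $f\ot 1$ and $1\ot g$ for $f,g\in\mf g^*$. I would plug $X=\sum e^i\ot 1\ot e_i$ and $Y=\sum 1\ot e^i\ot e_i$ into~\eqref{eq:ellCom} and read off the commutator $[f\ot 1,1\ot g]$ by pairing the auxiliary $H^{\ot 2}$ factor against $f\ot g$. The key point is to work to first order: write $\R^{1,2}_0=1+r+\cdots$ and $\R^{2,1}_0=1+r^{2,1}+\cdots$, expand both sides of~\eqref{eq:ellCom}, and observe that the zeroth-order terms give $X^{0,1}Y^{0,2}=Y^{0,2}X^{0,1}$ (the commutative limit $S(\mf g)^{\ot 2}$), while the first-order terms produce a nontrivial commutator. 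Pairing against $f\ot g$ and using that the canonical element $X$ realizes $f\ot 1$ and $Y$ realizes $1\ot g$, the $R$-matrix correction $r+r^{2,1}=t$ contributes exactly the scalar $\langle f,g\rangle$, where $\langle\ ,\ \rangle$ is the pairing induced by $t=m^*$ on $\mf g^*$. This matches~\eqref{WeylDefRel} and identifies $E^{(0)}\cong D(\mf g)$.

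For the Fourier transform statement, I would use the $\slz$-action from the corollary, transported through the isomorphism $E^{(0)}_{(S(\mf g),\R_0)}\cong E^{(1)}\cong E^{(0)}$; since $S(\mf g)$ is ribbon (via $\nu_0=\exp(-C/2)$), all $\tilde H_k$ are isomorphic and the action of $A$ is well-defined on $E^{(0)}$. The action of $A$ sends $X\mapsto Y$ and $Y\mapsto YX^{-1}Y^{-1}$. In the classical limit the generators $X,Y$ correspond to exponentiated versions of the additive generators $f,g$; more precisely, the multiplicative group-like structure of the canonical elements degenerates so that $X\mapsto Y$ becomes $f\mapsto g$ on the Lie-algebra-level generators, while $Y\mapsto YX^{-1}Y^{-1}$ becomes $g\mapsto -f$ because the commutative limit kills the conjugation $YX^{-1}Y^{-1}\to X^{-1}$ and the inverse produces the sign. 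Thus $A(f,g)=(-g,f)$, which is the classical Fourier transform on $D(\mf g)$ (sending $\partial\mapsto x$, $x\mapsto-\partial$).

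The main obstacle I anticipate is the careful bookkeeping in the second paragraph: because~\eqref{eq:ellCom} is a relation among $X,Y,\R$ rather than a plain commutator, one must correctly expand the three factors of $\R^{\pm}$ on the right-hand side and verify that all higher-order terms, and in particular the quadratic terms in $r$, either cancel or contribute nothing to the Lie-algebra-generator commutator. The subtlety is that $E^{(0)}$ is defined over $\C[[\h]]$-free data only after a rescaling (the naive $\h=0$ limit gives the commutative $S(\mf g)^{\ot 2}$), so I must track the correct power of $\h$ at which the commutator becomes the nontrivial pairing, confirming that the degeneration is exactly the one producing the Weyl algebra and not a trivial or divergent limit. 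Verifying the precise signs and the exponential-versus-additive dictionary in the translation of the $A$-action is where the argument demands the most care.
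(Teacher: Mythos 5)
Your identification of $E^{(0)}_{S(\mf g)}$ with $D(\mf g)$ is in the right spirit, but you frame as a perturbative expansion what the paper does exactly, and in the process you import a formal parameter that is not there: this proposition involves no $\h$ (that enters only in the subsequent degeneration of $E_U$), so there is no ``correct power of $\h$'' to track and no risk of a divergent limit. Since $S(\mf g)$ is commutative, the factors $\R_0^{1,2}$ and $\R_0^{2,1}$ are central in $E\ot \widehat{S}(\mf g)^{\ot 2}$, so \eqref{eq:ellCom} collapses exactly to the group commutator $(X^{0,1},Y^{0,2})=\R_0^{2,1}\R_0^{1,2}=\exp(t^{1,2})$; writing $X=\exp(x)$, $Y=\exp(y)$ and using that $t^{1,2}$ commutes with $x^{0,1}$ and $y^{0,2}$, this is equivalent to $[x^{0,1},y^{0,2}]=t^{1,2}$, which upon applying $f\ot g$ gives the Weyl relations. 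This exact reformulation also settles a point your generator-level pairing leaves open, namely that the \emph{full} set of cross relations (not just those between elements of $\mf g^*$) is captured by the Heisenberg relation.

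The genuine gap is in the Fourier transform step. The assertion that ``the commutative limit kills the conjugation $YX^{-1}Y^{-1}\to X^{-1}$'' is false: $D(\mf g)$ is not commutative, and the Weyl relations give $Y^{-1}xY=x+(1\ot C)$, hence $YX^{-1}Y^{-1}=X^{-1}\exp(1\ot C)\neq X^{-1}$. Moreover, the $\slz$-action on $E^{(0)}$ is only defined by transporting the action on $E^{(1)}$ through the ribbon isomorphism, and this transport is not innocent: it replaces the naive image of $y$ by $Y^{-1}(-x+(1\ot C))Y$, the extra $(1\ot C)$ coming from the ribbon element $\nu_0=\exp(-C/2)$. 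The clean answer $A\cdot y=-x$ arises precisely from the cancellation of this ribbon correction against the nontrivial conjugation: $Y^{-1}(-x+(1\ot C))Y=-(x+(1\ot C))+(1\ot C)=-x$. Your argument omits both contributions; carried out literally it yields $A\cdot Y=X^{-1}\exp(1\ot C)$, which agrees with $X^{-1}$ only after pairing against degree-one functionals and is not the Fourier automorphism of $D(\mf g)$ (the discrepancy is visible already on $S^2(\mf g^*)$). To close the gap you must make the ribbon twist explicit and verify this cancellation.
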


\begin{proof}
	Let $E$ be the $0$th elliptic double of $(S(\mf g),\R_0)$. Let $e_i$ be a basis of $\mf g$, $e^i$ the dual basis of $\mf g^*$ and define $x,y \in E \ot U(\mf g)$ by
	\begin{align*}
		x&=\sum e^i \ot 1 \ot e_i & y&=\sum 1\ot e^i \ot e_i.
	\end{align*}
	The restricted dual of $S(\mf g)$ is $S(\mf g^*)$ and the images of the corresponding canonical elements in $E\ot S(\mf g)$ are $X=\exp(x)$ and $Y=\exp(y)$ respectively. Since $S(\mf g)$ is commutative, equation~\eqref{eq:reflec} reduces to the standard relation,
\[
(\id \ot \Delta)(X)=X^{0,1}X^{0,2}
\]
in $(S(\mf g^*)\ot 1) \ot S(\mf g)^{\ot 2}\subset E\ot S(\mf g)^{\ot 2}$, hence the braided dual and the restricted dual coincide. Likewise, the defining equation of the elliptic double reduces to:
\[
	(X^{0,1},Y^{0,2})=\R_0^{2,1}\R_0^{1,2}
\]
in $E\ot S(\mf g)^{\ot 2}$, where $(a,b)=aba^{-1}b^{-1}$. Since
\[
[x^{0,1},t^{1,2}]=[y^{0,2},t^{1,2}]=0,
\]
this equation is equivalent to:
\[
[x^{0,1},y^{0,2}]=t^{1,2}.
\]
Applying $f$ and $g$ to the first and second components, respectively, of the above equation gives the defining relations \eqref{WeylDefRel} of $D(\mf g)$.

Since $(S(\mf g),\R_0)$ is ribbon, $E_{S(\mf g)}^{(0)}$ is isomorphic to $E_{S(\mf g)}^{(1)}$. Pulling back the action of the $A$ generator of $\slz$ through this isomorphism, we find:
\begin{align*}
x &\mapsto y & y &\mapsto Y^{-1}(-x+(1\ot C))Y.
\end{align*}
It is easily seen that the cross relations of $D(\mf g)$ implies
\[
Y^{-1}xY=x+(1\ot C).
\]
Hence $A$ maps $x$ to $y$ and $y$ to $-x$. 

Pulling back the $B$ action through this isomorphism one get
\begin{align*}
	x &\mapsto x & y&\mapsto \log(e^ye^{-x}e^{1\ot C/2}).
\end{align*}
Since
\[
	[x,y]=1\ot C
\]
and since $1\ot C$ commutes with $x$ and $y$, the Baker--Campbell--Hausdorff formula implies that
\[
\log(e^ye^{-x}e^{1\ot C/2})=y-x
\]
as required.
\end{proof}

\begin{rmk} Since $A^4$ acts as the identity, the above action of $\widetilde{SL_2(\mathbb{Z})}$ on $D(\mf g)$ factors through an action of $SL_2(\mathbb{Z})$. It coincides with the one coming from an homomorphism $SL_2(\mathbb{Z})\to Sp(\mf g \oplus \mf g)$, the latter being the group of linear symplectomorphisms of the vector space $\mf g \oplus \mf g$, equipped with the symplectic form coming from the Killing form.\end{rmk}

	\begin{rmk} It is interesting to ask whether the action of $\widetilde{SL_2(\mathbb{Z})}$ on $D_q(G)$ can be degenerated to an action on $D(G)$, not just to $D(\mf g)$.  The degeneration procedure for obtaining $D(G)$ from $D_q(G)$ is not compatible, however, with the $\widetilde{SL_2(\mathbb{Z})}$-action; hence, a naive attempt at re-creating the procedure for $D(\mf g)$ will not work.  This is not surprising, as there is not a good notion of Fourier transform on $D(G)$, essentially because the cotangent bundle $T^*G = G\times \mf g^*$ has fewer symplectomorphisms than $T^*\mf g = \mf g\times \mf g^*\cong \mf g\oplus \mf g$.
	\end{rmk}

Let $U_{\h^2}(\mf g)$ be the $\C[[\h]]$-Hopf algebra obtained by formally replacing $\h$ by $\h^2$ in the definition of the product, the coproduct and the R-matrix of $U_{\h}(\mf g)$. Denote by $\delta_n$ the map $(\id-\epsilon)^{\ot n}\circ \Delta^n$ where $\epsilon$ is the counit of $U_{\h^2}(\mf g)$. Denote by $\widehat{U}$ the quantum formal series Hopf algebra (QFSHA) attached to $U_{\h^2}(\mf g)$, i.e. the sub-algebra
\[
	\widehat{U}=\left\{x \in U_{\h^2}(\mf g),\ \delta_n(x) \in \h^nU_{\h^2}(\mf g),\ \forall n\geq 0\right\}
\]
It is known~\cite{Drinfeld1987,Gavarini2002} that $\widehat{U}$ is a flat deformation of $\widehat{S}(\mf g)$. Hence, choose a $\C[[\h]]$-module identification
\[
\psi:\widehat{U}\longrightarrow \widehat{S}(\mf g)[[\h]]
\]
which is the identity modulo $\h$, and let $U\subset \widehat{U}$ be the preimage under $\psi$ of $S(\mf g)[[\h]]$.
\begin{prop}\label{prop:modh}
We have the following:
\begin{enumerate}
\item $U$ is a Hopf algebra.
\item We have canonical bialgebra isomorphisms:
\begin{align*}
\widehat U /(\h) &\cong \widehat{S}(\mf g), & U/(\h) & \cong S(\mf g).
\end{align*}
\item The R-matrix of $U_{\h^2}(\mf g)$ belongs to $\widehat{U}^{\ot 2}$ and its image in $\widehat{S}(\mf g)^{\ot 2}$ is $\R_0$.
\end{enumerate}
\end{prop}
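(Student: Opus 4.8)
The plan is to derive all three assertions from the quantum duality principle of Drinfeld and Gavarini, which is already cited for the flatness of $\widehat U$. That principle gives more than flatness: it realizes $\widehat U$ as a topologically free Hopf algebra over $\C[[\h]]$ together with a \emph{canonical Hopf algebra} isomorphism $\widehat U/(\h)\cong\widehat S(\mf g)$. This immediately settles the $\widehat U$-halves of (1) and (2), so the real content is to transfer these statements to the smaller object $U$ and to identify the $R$-matrix.

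First I would treat (1). The subtlety is that $\psi$ is only an identification of $\C[[\h]]$-modules, so one cannot transport the algebra or coalgebra structure of $S(\mf g)[[\h]]$ through it; a naive module splitting need not send $U$ to a subalgebra. The remedy is to describe $U$ intrinsically rather than through $\psi$. Using the Chevalley presentation of $U_{\h^2}(\mf g)$, the QFSHA $\widehat U$ is topologically generated over $\C[[\h]]$ by the rescaled generators $\h E_i,\ \h F_i,\ \h H_i$ (these satisfy the $\delta_n$-condition defining $\widehat U$, whereas the unscaled generators do not). I would then take $\psi$ to respect a PBW basis, so that $U$ is exactly the $\h$-adically complete subalgebra generated by $\h E_i,\ \h F_i,\ \h H_i$; this is manifestly closed under multiplication and maps isomorphically onto $S(\mf g)[[\h]]$. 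Closure under the coproduct is checked on generators: $\h H_i$ is primitive, while $\Delta(\h E_i)=(\h E_i)\ot K_i+1\ot(\h E_i)$ and similarly for $\h F_i$, where $K_i=\exp(\h\cdot\h H_i)$ lies in $U$ because each of its $\h$-coefficients is a polynomial in $\h H_i$. The same observation, together with $K_i^{-1}\in U$, gives closure under the antipode, so $U$ is a (topological) sub-Hopf-algebra of $\widehat U$.

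With (1) in hand, (2) is formal. Since $\psi$ is the identity modulo $\h$, reduction gives an injection $U/(\h)\hookrightarrow\widehat U/(\h)\cong\widehat S(\mf g)$ of Hopf algebras whose image is the subalgebra generated by the degree-one part $\mf g$, namely $S(\mf g)$; this is the asserted isomorphism $U/(\h)\cong S(\mf g)$, and the $\widehat U$ statement is the cited isomorphism itself. For (3) I would use that, expressed in the rescaled generators, the $R$-matrix of $U_{\h^2}(\mf g)$ has the form $\R=1+\sum_i(\h H_i)\ot(\h H^i)+\cdots$, since the classical $r$-matrix term $\h^2 r$ becomes order zero in $\h$ after the rescaling $H_i\mapsto\h H_i$. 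One checks $\R\in\widehat U\ot\widehat U$ (completed) directly from its explicit quasi-triangular form, equivalently from the $\delta_n$-criterion, and reading off the constant $\h$-term identifies its image in $\widehat{S}(\mf g)^{\ot 2}$ with $\exp(r)=\R_0$.

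The main obstacle is precisely the coproduct step in (1): because $\psi$ respects only the module structure, there is no free lunch in showing $U$ is a coalgebra, and one genuinely needs the explicit Hopf structure of $U_{\h^2}(\mf g)$ — in particular that the group-like elements $K_i$, which appear in the coproducts of the generators, already lie in $U$. Once this is secured, the remaining verifications are routine reductions modulo $\h$.
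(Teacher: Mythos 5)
Your argument is essentially the ``explicit check'' that the paper alludes to in its first sentence (``All of this can be checked explicitly'') but does not carry out; the proof the paper actually writes is a different, conceptual one. The paper invokes functoriality of Etingof--Kazhdan quantization: $U_{\h^2}(\mf g)$ is the quantization of $(\mf g[[\h]],\mu,\h^2\delta,\h^2 r)$, the QFSHA $\widehat U$ is the quantization of $(\mf g[[\h]],\h\mu,\h\delta,r)$, and the quantum duality principle is the lift of the inclusion $x\mapsto \h x$ of the latter into the former. Reducing modulo $\h$ then kills both the bracket and the cobracket at once, so $\widehat U/(\h)$ is the quantization of $(\mf g,0,0,r)$, which is $(\widehat S(\mf g),\R_0)$ -- this yields parts (2) and (3) simultaneously, including the identification of the $R$-matrix, with no reference to a presentation; part (1) is only asserted (``one can show that the structures restrict''). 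Your route instead works with the Chevalley presentation and buys a genuinely concrete proof of (1), which is the part the paper treats most lightly: the observation that $K_i=\exp(\h\cdot\h H_i)$ and $K_i^{-1}$ already lie in $U$ is exactly what makes closure under $\Delta$ and $S$ work, and this does not follow formally from the module identification $\psi$. The price is that you must import Gavarini-style explicit computations at two points: first, that the rescaled PBW monomials exhaust the QFSHA (be careful here to distinguish the two completions -- $\widehat U$ deforms $\widehat S(\mf g)$ and so is complete in the symmetric-degree direction as well, whereas your $U$ is only $\h$-adically complete; your phrasing ``topologically generated'' conflates these, though your definition of $U$ then separates them correctly); and second, the explicit multiplicative formula for $\R$ needed to verify that its image modulo $\h$ is $\exp(r)$, which in the paper's argument comes for free from the quantization of $(\mf g,0,0,r)$. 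Both approaches are sound; yours is more elementary and more informative about (1), the paper's is shorter and handles (3) without computation.
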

One can therefore consider the 0th elliptic double of $U$. A direct consequence of the above proposition is then:
\begin{cor}
	The algebra $E_U$ is a flat deformation of the Weyl algebra $D(\mf g)$, and the $\slz$-action on $E_U$ degenerates to the $\slz$-action on $D(\mf g)$. In particular, the quantum Fourier transform degenerates to the classical one. 
\end{cor}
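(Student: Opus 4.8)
My plan is to derive the entire statement from the $\h$-adic naturality of the braided dual and elliptic double constructions, feeding in Proposition~\ref{prop:modh} as the only nontrivial input. First I would observe that, since $U$ is a $\C[[\h]]$-Hopf algebra which is topologically free as a $\C[[\h]]$-module with $U/(\h)\cong S(\mf g)$, its braided dual $\tilde U_0$ is again a topologically free $\C[[\h]]$-module. The twisted multiplication defining $\tilde U_0$ depends on $\h$ only through the $R$-matrix, which by Proposition~\ref{prop:modh}(3) lies in $\widehat U^{\ot 2}$ and reduces modulo $\h$ to $\R_0$; hence $\tilde U_0$ degenerates to the braided dual of $(S(\mf g),\R_0)$.

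Next I would treat flatness and the reduction of the multiplication. As a $\C[[\h]]$-module, $E_U=E_U^{(0)}$ is the tensor square $\tilde U_0^{\ot 2}$, so it is topologically free, which is the flatness claim. Its cross relations are governed by equation~\eqref{eq:ellCom}, whose sole $\h$-dependence is again through the $R$-matrix; reducing modulo $\h$ replaces $\R$ by $\R_0$, so $E_U/(\h)$ carries exactly the presentation of $E_{S(\mf g)}^{(0)}$. By the preceding proposition the latter is the Weyl algebra $D(\mf g)$, establishing that $E_U$ is a flat deformation of $D(\mf g)$.

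For the $\slz$-action I would exploit that it is defined uniformly, on any ribbon input, by the $\h$-free formulas $A\cdot X=Y$, $A\cdot Y=YX^{-1}Y^{-1}$, $B\cdot X=X$, $B\cdot Y=YX^{-1}$ on the canonical elements, via the universal property of Corollary~\ref{cor:universal}. The canonical elements $X,Y\in E_U\ot U$ reduce modulo $\h$ to the canonical elements of $E_{S(\mf g)}^{(0)}\ot S(\mf g)=D(\mf g)\ot S(\mf g)$, and the ribbon element of $U_{\h^2}(\mf g)$ lies in $\widehat U$ and reduces to $\nu_0=\exp(-C/2)$, so the isomorphism $E_U^{(0)}\cong E_U^{(1)}$ used to transport the $A$-action is itself a flat deformation of the corresponding isomorphism for $S(\mf g)$. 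Since all the transport formulas are $\h$-independent in the canonical elements, the $\slz$-action on $E_U$ reduces modulo $\h$ to the one on $D(\mf g)$; in particular the $A$-generator, the quantum Fourier transform, degenerates to the map $(f,g)\mapsto(-g,f)$ computed in the preceding proposition, i.e. the classical Fourier transform.

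The hard part will be the analytic bookkeeping that makes ``reduction modulo $\h$'' rigorous in the topological $\C[[\h]]$ setting. One must verify that the braided-dual and elliptic-double multiplications, which a priori involve the $R$-matrix from the completion $\widehat U^{\ot 2}$ rather than $U^{\ot 2}$, are well-defined and $\h$-adically continuous on the topologically free modules in play, and that they descend correctly onto the $S(\mf g)$ constructions — which themselves already require $\R_0\notin S(\mf g)^{\ot 2}$ to be handled as in the preceding section. Once the relevant modules are identified as topologically free and the structure maps (including the ribbon element and the isomorphism $E_U^{(0)}\cong E_U^{(1)}$) are shown to be continuous in $\h$ and to reduce correctly, both flatness and the compatibility of the $\slz$-action with the $q\to 1$ limit follow formally.
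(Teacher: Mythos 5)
Your argument is correct and follows essentially the same route as the paper, which states this corollary as a direct consequence of Proposition~\ref{prop:modh} together with the preceding computation of $E_{S(\mf g)}^{(0)}\cong D(\mf g)$, giving no further details. Your write-up simply makes explicit the naturality of the braided dual, the elliptic double, and the $\slz$-action in the Hopf data $(U,\R,\nu)$ and their reduction modulo $\h$, which is exactly the content the paper is implicitly invoking.
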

\begin{proof}[Proof of Prop.~\ref{prop:modh}]
	All of this can be checked explicitly. A more conceptual argument is as follows: recall that $(\mf g, \mu,\delta,r)$ is a quasi-triangular Lie bialgebra, where we denote by $\mu$ its bracket and by $\delta$ its co-bracket. The quantum group $U_{\h^2}(\mf g)$ is obtained by applying an Etingof--Kazhdan quantization functor~\cite{Etingof1996} to the $\C[[\h]]$-quasi-triangular Lie bialgebra $(\mf g[[\h]], \mu,\h^2 \delta,\h^2 r)$. On the other hand, $\widehat{U}$ is the quasi-triangular Hopf algebra obtained by applying the same functor to the quasi-triangular Lie bialgebra $(\mf g[[\h]], \h\mu,\h\delta,r)$.  The QFSHA construction is the lift of the inclusion,
	\[
(\mf g[[\h]], \h\mu,\h\delta,r) \longrightarrow (\mf g[[\h]], \mu,\h^2\delta,\h^2r),
	\]
	given by $x\mapsto \h x$ (since $r\in \mf g^{\ot 2}$, its image is indeed $\h^2r$).

One can show that the product, the coproduct and the antipode on $\widehat{U}$ restrict to a well-defined Hopf algebra structure on $U$.
By construction, the reduction modulo $\h$ of $\widehat{U}$ is the quantization of the $\C$-quasi-triangular Lie bialgebra,
\[
(\mf g[[\h]], \h\mu,\h\delta,r)/(\h)\cong (\mf g,0,0,r),
\]
which is easily seen to be $(\widehat{S}(\mf g),\R_0)$.
\end{proof}

%}}}

\end{document}